\documentclass[pdflatex,sn-mathphys-num]{sn-jnl}% Math and Physical Sciences Numbered Reference Style 
\usepackage{graphicx}%
\usepackage{multirow}%
\usepackage{amsmath,amssymb,amsfonts}%
\usepackage{amsthm}%
\usepackage{mathrsfs}%
\usepackage[title]{appendix}%
\usepackage{xcolor}%
\usepackage{textcomp}%
\usepackage{manyfoot}%
\usepackage{booktabs}%
\usepackage{algorithm}%
\usepackage{algorithmicx}%
\usepackage{algpseudocode}%
\usepackage{listings}%
\newtheorem{theorem}{Theorem}[section]
\newtheorem{lemma}[theorem]{Lemma}
\newtheorem{proposition}[theorem]{Proposition}

\theoremstyle{definition}

\newtheorem{definition}[theorem]{Definition}

\newtheorem{example}[theorem]{Example}

\newtheorem{remark}[theorem]{Remark}
\numberwithin{equation}{section}

\allowdisplaybreaks
\begin{document}
	
	\title[Article Title]{A Kantorovich-type variant of Gr\"unwald
		Interpolation Operators}
	
	%%=============================================================%%
	%% GivenName	-> \fnm{Joergen W.}
	%% Particle	-> \spfx{van der} -> surname prefix
	%% FamilyName	-> \sur{Ploeg}
	%% Suffix	-> \sfx{IV}
	%% \author*[1,2]{\fnm{Joergen W.} \spfx{van der} \sur{Ploeg} 
		%%  \sfx{IV}}\email{iauthor@gmail.com}
	%%=============================================================%%
	
	\author*[]{\fnm{P. C.} \sur{Vinaya}}\email{vinayapc01@gmail.com}

	\affil*[1]{\orgdiv{Department of Mathematics}, \orgname{Cochin University of Science and Technology}, \orgaddress{\street{Kalamassery}, \city{Cochin-22}, \postcode{682022}, \state{Kerala}, \country{India}}}
	
	%%==================================%%
	%% Sample for unstructured abstract %%
	%%==================================%%
	
	\abstract{
		In this paper, we introduce a new sequence of operators based on the Gr\"unwald interpolation operators on Chebyshev nodes on the space $L^p[0,\pi]$. The operators we consider are integral variants of the Gr\"unwald interpolation operators, inspired from the classical Kantorovich operators. Unlike the original Gr\"unwald interpolation operators, our construction enables the derivation of convergence results not only on $C[0,\pi]$ but also in the space $L^p[0,\pi]$. First, we establish the uniform boundedness of this sequence on these spaces and subsequently prove the convergence of the operators. We obtain quantitative estimates using modulus of continuity and a suitable K-functional. Furthermore, we derive a point-wise estimate via the Hardy–Littlewood maximal operator.
		By invoking a Korovkin-type theorem, we extend the convergence results to several Banach function spaces on a nontrivial subspace. In particular, we establish these results for weighted Lebesgue spaces, Grand Lebesgue spaces, Morrey spaces, Orlicz spaces etc.
	}

	\keywords{Grünwald interpolation operators, Kantorovich operators, modulus of continuity, K-functionals, Korovkin-type theorems, Banach Function Spaces}
	
	%%\pacs[JEL Classification]{D8, H51}
	
	\pacs[MSC Classification]{41A05, 41A10, 41A25, 41A35}

	%%\pacs[JEL Classification]{D8, H51}
	
	%%\pacs[MSC Classification]{35A01, 65L10, 65L12, 65L20, 65L70}
	
	\maketitle
	\section{Introduction and Preliminaries}\label{intro}
	The classical Kantorovich operators were introduced as a natural modification of the classical Bernstein operators in order to extend approximation process from continuous functions to the class of integrable functions. Since the Bernstein operators are defined using the pointwise evaluation of the function, this becomes restrictive when dealing with functions that are not continuous or are only defined almost everywhere, such as those in $L^p$ spaces. To overcome this limitation, Kantorovich replaced point evaluations with local averages, defining the operators in terms of integrals over small subintervals. In $1941$, G. Gr\"unwald introduced a sequence operators using the classical Lagrange interpolation operators on Chebychev nodes and established their uniform convergence on the space $C[0,\pi]$. This work aims to extend this convergence result to the space $L^p[0,\pi]$. By a similar approach to that of Kantorovich, we develop integral variants of these operators and  extend them in the $L^p$ space and more generally to a wide class of  Banach function spaces. Furthermore, we investigate not only the convergence behavior of these operators but also derive quantitative estimates that describe their rate of convergence.
	
	First, we recall the the definition of the Lagrange interpolation operators.
	\begin{definition}
		Let $x_1, x_2, \ldots, x_n$ be a sequence of distinct nodes in $[-1,1]$, and suppose that $f \in C[-1,1]$. 
		The classical Lagrange interpolation operator produces a unique polynomial of degree at most $n-1$ that takes the values 
		$f(x_1), f(x_2), \ldots, f(x_n)$ at the corresponding points $x_1, x_2, \ldots, x_n$. This can be expressed as
		\[
		L_n(f)(x)=\sum\limits_{k=1}^nf(x_k)P_k(x),\ x\in [-1,1],
		\]
		where $P_k$ denotes the fundamental polynomials of the Lagrange interpolation operators given by 
		\[
		P_k(x)=\frac{\zeta(x)}{\zeta^\prime(x_k)(x-x_k)},
		\]
		$k=1,2,\ldots,n$ and the polynomials $\zeta(x)$ is defined by 
		$\zeta(x)=c(x-x_1)(x-x_2)\ldots (x-x_n),$
		$c$ is arbitrary non-zero constant.
	\end{definition}
	We also have  $\sum\limits_{k=1}^nP_k(x)=1$ for every $x\in[-1,1]$. In this work, we restrict our attention to the Chebyshev nodes of first kind, which are given by
	\[
	x_k = \cos \theta_k^{(n)}, \quad \text{where} \quad \theta_k^{(n)} = \frac{(2k - 1)\pi}{2n}, \quad k = 1, 2, \dots, n.
	\]
	
	The Lagrange interpolation operator based on Chebyshev nodes can be expressed as
	\[
	L_n(f)(\theta) = \sum_{k=1}^n f(\cos \theta_k^{(n)})(-1)^{k+1} \frac{\cos(n\theta) \sin \theta_k^{(n)}}{n(\cos\theta - \cos \theta_k^{(n)})},
	\]
	where
	\[
	P_k(\theta) = (-1)^{k+1} \frac{\cos(n\theta) \sin \theta_k^{(n)}}{n(\cos\theta - \cos \theta_k^{(n)})},
	\quad k = 1, 2, \dots, n.
	\]
	The problem of convergence of the Lagrange interpolation operators on Chebyshev nodes has been studied by many mathematicians. 
	In 1914, G.~Faber proved the existence of a continuous function for which $L_n(f)$ does not converge to $f$ at some point in $[-1,1]$ (see \cite{faber}). 
	Later, Gr\"unwald and Marcinkiewicz proved the existence of a function $f \in C[-1,1]$ such that, for all $x \in [-1,1]$, the sequence $L_n(f)(x)$ diverges. 
	This result is known as the Gr\"unwald--Marcinkiewicz theorem (see \cite{mills}).
	
	Despite this divergence result, in 1941, G.~Gr\"unwald established a convergence result for an averaged sequence of operators constructed using the Lagrange interpolation operators on Chebyshev nodes. 
	He obtained the following theorem.
	
	\begin{theorem}[\textbf{G. Grünwald}, \cite{grunwald}]\label{grun}
		Let $f\in C[-1,1]$, then 
		\[
		\lim\limits_{n\rightarrow\infty}\frac{1}{2}\{L_n(f)(\theta-\frac{\pi}{2n})+L_n(f)(\theta+\frac{\pi}{2n})\}=f(\cos(\theta)),
		\]
		where the convergence is uniform on the interval $[0,\pi]$.
	\end{theorem}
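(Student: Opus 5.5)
The plan is to write the averaged operator as a positive-looking linear combination of the node values and then run the standard Lebesgue-type argument: a bounded Lebesgue function plus localization of the kernel. Put $h=\frac{\pi}{2n}$ and
\[
G_n(f)(\theta)=\tfrac12\bigl\{L_n(f)(\theta-h)+L_n(f)(\theta+h)\bigr\}=\sum_{k=1}^n f(\cos\theta_k^{(n)})\,Q_k(\theta),\qquad Q_k(\theta)=\tfrac12\bigl\{P_k(\theta-h)+P_k(\theta+h)\bigr\}.
\]
Since $\sum_k P_k\equiv 1$, we also have $\sum_k Q_k(\theta)=1$. This gives
\[
|G_n(f)(\theta)-f(\cos\theta)|\le \sum_{k=1}^n |Q_k(\theta)|\,\bigl|f(\cos\theta_k^{(n)})-f(\cos\theta)\bigr| .
\]
It therefore suffices to prove two facts, uniformly in $\theta\in[0,\pi]$:
\begin{itemize}
\item[(a)] $\sum_k|Q_k(\theta)|\le C$;
\item[(b)] for every fixed $\delta>0$, $\sum_{|\theta-\theta_k^{(n)}|\ge\delta}|Q_k(\theta)|\to0$.
\end{itemize}
Given (a) and (b), split the sum at $|\theta-\theta_k^{(n)}|<\delta$. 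The near part is at most $C\,\omega(f\circ\cos,\delta)$ and the far part is at most $2\|f\|_\infty\cdot o(1)$. Letting $n\to\infty$ and then $\delta\to0$ yields uniform convergence.

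The key computation is an explicit formula for $Q_k$. Since $\cos(n(\theta\pm h))=\cos(n\theta\pm\pi/2)=\mp\sin n\theta$, the two cosine factors combine with opposite signs:
\[
Q_k(\theta)=\frac{(-1)^{k+1}\sin\theta_k^{(n)}\sin n\theta}{2n}\cdot\frac{\cos(\theta+h)-\cos(\theta-h)}{(\cos(\theta-h)-\cos\theta_k^{(n)})(\cos(\theta+h)-\cos\theta_k^{(n)})}.
\]
The numerator equals $-2\sin\theta\sin h$, which is $O(\sin\theta/n)$. Each denominator factor can be written as $2\sin\frac{\theta\pm h+\theta_k}{2}\sin\frac{\theta_k-\theta\mp h}{2}$. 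This should give
\[
|Q_k(\theta)|\lesssim \frac{|\sin n\theta|\,\sin\theta\,\sin\theta_k}{n^2\,\sin^2\frac{\theta+\theta_k}{2}\,\bigl|(\theta-\theta_k)^2-h^2\bigr|}.
\]
The ratio $\sin\theta\sin\theta_k/\sin^2\frac{\theta+\theta_k}{2}$ is bounded, since $\sin\frac{\theta+\theta_k}{2}$ dominates both $\sqrt{\sin\theta}$-type factors on $[0,\pi]$. Away from the two nearest nodes this gives $|Q_k(\theta)|\lesssim 1/(n^2(\theta-\theta_k)^2)$. The nodes are spaced $\pi/n$ apart, so summing over $k$ gives a bounded quantity, which is (a). Restricting to $|\theta-\theta_k|\ge\delta$ gives a bound $O(1/(n\delta))$, which is (b).

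The main obstacle is the uniformity of (a) near the apparent singularities $\theta=\theta_k\pm h$ and near the endpoints $0,\pi$. At $\theta=\theta_k\pm h$ we have $n\theta=k\pi$ or $(k-1)\pi$, so $\sin n\theta$ vanishes there. I would exploit this by bounding $|\sin n\theta|\le n\,\bigl|\theta-(\theta_k\pm h)\bigr|$, so that the vanishing factor $|(\theta-\theta_k)^2-h^2|$ is cancelled and each of the $O(1)$ nearest terms is $O(1)$. Alternatively, one can bound these few terms directly by $|P_k|\le 1$-type estimates, using that $P_k(\theta_j)=\delta_{jk}$ and that $P_k$ is smooth with derivative $O(n)$ near its zeros. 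Near $\theta=0$ or $\pi$, the half-angle sines must be handled through the boundedness of the ratio above, taking care with $\theta-h<0$ or $\theta+h>\pi$. This is handled by evenness: $P_k(-\theta)=P_k(\theta)$ and $P_k$ is symmetric about $\pi$, so those cases reduce to the interior estimate.
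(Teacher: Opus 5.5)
The paper does not prove this theorem. It is quoted from Gr\"unwald, and the paper only imports his two ingredients. The first is Lemma~\ref{lem}, which is exactly your (a). The second is the decay bound $|S_{k,n}(\theta)|\le \frac{\pi^3}{4n^2}\bigl(\theta-\theta_k^{(n)}-\frac{\pi}{2n}\bigr)^{-2}$ quoted in the proof of Proposition~\ref{propos}, which gives your (b). The proof of Theorem~\ref{main} even uses the conclusion in your split form, $\sup_\theta\sum_k|f(\theta_k^{(n)})-f(\theta)|\,|S_{k,n}(\theta)|\to 0$. So your plan is the standard route, and it is correct. The closed form for $Q_k$ is right, $\sin\theta\,\sin\theta_k\le \sin^2\frac{\theta+\theta_k}{2}$ holds, and the zeros of $\sin n\theta$ at $\theta_k\pm h$ do cancel the apparent singularities.

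One displayed step is false as written, though harmlessly. Replacing $\sin\frac{\theta+\theta_k+h}{2}\,\sin\frac{\theta+\theta_k-h}{2}$ by $\sin^2\frac{\theta+\theta_k}{2}$ is fine for $2\le k\le n-1$: then $\theta_k\ge 3h$, so $\theta+\theta_k-h\ge\frac23(\theta+\theta_k)$, and symmetrically near $\pi$. It fails for $k=1$ near $\theta=0$, where $\sin\frac{\theta+\theta_1-h}{2}=\sin\frac{\theta}{2}\to 0$. Indeed $Q_1(0)=\frac12\{P_1(-h)+P_1(h)\}=1$, while your right-hand side tends to $0$; the same happens for $k=n$ near $\pi$. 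These are nearest-node terms, so (a) and (b) survive, but your evenness remark does not actually address this.

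The clean fix is the uniform bound $|P_k(\phi)|\le 2$ for all real $\phi$. Since $P_k$ depends only on $\cos\phi$, you may take $\phi\in[0,\pi]$. The bound then follows from $|\cos n\phi|=2\bigl|\sin\frac{n(\phi+\theta_k)}{2}\sin\frac{n(\phi-\theta_k)}{2}\bigr|$, $|\sin nx|\le n|\sin x|$, and $\sin\theta_k\le 2\sin\frac{\phi+\theta_k}{2}$. Hence $|Q_k|\le 2$ everywhere, which disposes of all $O(1)$ nearest terms at once, endpoints included. This also makes your derivative argument unnecessary; that argument, via Bernstein's inequality, would presuppose such a bound anyway.
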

	The following lemma plays an important role in obtaining the theorem.
	\begin{lemma}[\textbf{G. Grünwald}, \cite{grunwald}]\label{lem}
		\[
		\frac{1}{2}\sum\limits_{k=1}^n \left| P_k\left(\theta - \frac{\pi}{2n}\right) + P_k\left(\theta + \frac{\pi}{2n}\right) \right| < c_1,
		\]
		where $c_1>0$ is an absolute constant
	\end{lemma}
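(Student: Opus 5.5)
The plan is to compute the sum of the two shifted fundamental polynomials in closed form, so that the alternating factor $(-1)^{k+1}$ disappears inside the absolute value and the oscillating factor $\cos(n\theta)$ becomes a common factor $\sin(n\theta)$. Write $h=\frac{\pi}{2n}$. Since $\cos(n(\theta\pm h))=\cos(n\theta\pm\frac{\pi}{2})=\mp\sin(n\theta)$, the definition of $P_k$ gives
\[
P_k(\theta-h)+P_k(\theta+h)=(-1)^{k+1}\frac{\sin(n\theta)\sin\theta_k^{(n)}}{n}\left[\frac{1}{\cos(\theta-h)-\cos\theta_k^{(n)}}-\frac{1}{\cos(\theta+h)-\cos\theta_k^{(n)}}\right].
\]
Combining the bracket over a common denominator and using $\cos(\theta+h)-\cos(\theta-h)=-2\sin\theta\sin h$, each summand is bounded in absolute value by
\[
\frac{2\,|\sin(n\theta)|\,\sin h\,\sin\theta\,\sin\theta_k^{(n)}}{n\,\bigl|\cos(\theta-h)-\cos\theta_k^{(n)}\bigr|\,\bigl|\cos(\theta+h)-\cos\theta_k^{(n)}\bigr|}.
\]
Next I would factor each denominator with $\cos a-\cos b=-2\sin\frac{a+b}{2}\sin\frac{a-b}{2}$. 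This splits every term into two kinds of factors.

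\emph{Sum factors.} The first kind is
\[
\frac{\sin\theta\,\sin\theta_k^{(n)}}{\sin\frac{\theta-h+\theta_k^{(n)}}{2}\,\sin\frac{\theta+h+\theta_k^{(n)}}{2}}.
\]
I expect this to be bounded by an absolute constant. Near $0$ it behaves like $\theta\theta_k/(\theta+\theta_k)^2\le\frac14$, using $\theta_k\ge h$ to handle the shift by $h$. Near $\pi$ the same holds by the symmetry $\theta\mapsto\pi-\theta$, $k\mapsto n+1-k$. In the interior both sines are of order one. A short case analysis, or the inequality $\sin x\ge \frac{2}{\pi}\min(x,\pi-x)$, should settle this.

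\emph{Difference factors.} The second kind is
\[
\frac{\sin h\,|\sin(n\theta)|}{n\,\bigl|\sin\frac{\theta-h-\theta_k^{(n)}}{2}\bigr|\,\bigl|\sin\frac{\theta+h-\theta_k^{(n)}}{2}\bigr|}.
\]
Here $\sin h\le \pi/(2n)$, so the prefactor is $O(n^{-2})$. Let $\theta_k\pm h=\frac{k\pi}{n}$ or $\frac{(k-1)\pi}{n}$ denote the zeros of $\sin(n\theta)$ adjacent to $\theta_k$. The two denominators vanish exactly when $\theta$ equals one of these points. Writing $d_k=\operatorname{dist}(\theta,\{\theta_k\pm h\})$, I bound this factor by $C n^{-2}|\sin n\theta|/(d_k\cdot(d_k+\text{const}/n))$, and the cases then follow. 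For the $O(1)$ indices $k$ with $d_k\lesssim 1/n$, use $|\sin(n\theta)|\le n d_k$ to cancel the vanishing factor, while the other factor is $\gtrsim 1/n$; each such term is $O(1)$. For the remaining $k$, $d_k\asymp |j|\pi/n$ with $j$ the index distance, so the term is $O(1/j^2)$, and summing over $j$ gives $O(\sum j^{-2})=O(1)$.

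The main obstacle is the bookkeeping in the last step: the singular set of the denominators must be matched exactly with the zeros of $\sin(n\theta)$, uniformly in $\theta\in[0,\pi]$ including near the endpoints, where the half-angle sines of sums also degenerate. This is why the sum factors are separated out first; I would then treat only the difference factors via the spacing of the nodes, with spacing $\pi/n$ and nodes offset by $h$ from the zeros of $\sin(n\theta)$.
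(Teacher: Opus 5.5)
Your argument is correct. With $h=\frac{\pi}{2n}$ you have the closed form $\tfrac12\bigl|P_k(\theta-h)+P_k(\theta+h)\bigr|=\frac{|\sin n\theta|\,\sin h\,\sin\theta\,\sin\theta_k^{(n)}}{n\,|\cos(\theta-h)-\cos\theta_k^{(n)}|\,|\cos(\theta+h)-\cos\theta_k^{(n)}|}$, the sum factors are uniformly bounded (via $\theta_k^{(n)}\ge h$ and the symmetry $\theta\mapsto\pi-\theta$), and $\sin n\theta$ cancels the zeros $\frac{(k-1)\pi}{n},\frac{k\pi}{n}$ of the difference factors; together these give a bound $O(1)+O(\sum_{j\ge1}j^{-2})$ uniform in $\theta$ and $n$. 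The paper gives no proof of its own and cites Gr\"unwald, and the estimate it quotes from him in the proof of Proposition~\ref{propos}, $|S_{k,n}(\theta)|\le\frac{\pi^3}{4n^2}\bigl(\theta-\theta_k^{(n)}-\frac{\pi}{2n}\bigr)^{-2}$, is the same quadratic-decay bound your computation produces, so you are following essentially the classical route.
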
 
	The operators considered by Grünwald are as follows. 
	\begin{definition}\label{def}
		For $n=1,2,\ldots$, define $G_n:C[0,\pi]\rightarrow C[0,\pi]$ by 
		\[
		G_n(f)(\theta):=\frac{1}{2}\sum\limits_{k=1}^nf(\theta_k^{(n)})\{P_k(\theta-\frac{\pi}{2n})+P_k(\theta+\frac{\pi}{2n})\}.
		\]
	\end{definition}
	We observe that $G_n$ is also an interpolation operator on $C[0,\pi]$.
	We refer to these operators as Gr\"unwald interpolation operators, or simply Gr\"unwald operators.
	From Lemma~\ref{lem}, it follows that $\|G_n(f)\|_\infty \leq c_1 \|f\|_\infty$ for all $f \in C[0,\pi]$, where $c_1$ does not depend on $n$ or $f$.
	Note that if $f \in C[-1,1]$, then $f \circ \cos \in C[0,\pi]$, where $\circ$ denotes the operation composition. 
	Hence, by Theorem~\ref{grun}, we have
	\[
	\lim_{n \to \infty} \|G_n(f \circ \cos) - f \circ \cos\|_\infty = 0
	\quad \text{for all } f \in C[-1,1].
	\] 
	By a slight modification of Gr\"unwald's proof, we also have $\lim\limits_{n\to\infty}\|G_n(f)-f\|_\infty=0$ for all $f\in C[0,\pi]$. Our aim is to extend the definition of to Grünwald operators to the space $L^p[0,\pi]$ for $1\leq p<+\infty$. A natural extension is not possible, since Grünwald operators are not bounded in the $L^1$ norm. A proof of this observation is given below.
	\begin{theorem}
		The operators $G_n$, $n=1,2,\ldots$ are not bounded with respect to the $L^1$-norm on $[0,\pi]$.
	\end{theorem}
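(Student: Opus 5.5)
We interpret the statement as follows: for each fixed $n$ (or at least for infinitely many $n$) there is no constant $C$ with $\|G_n(f)\|_{L^1[0,\pi]}\le C\|f\|_{L^1[0,\pi]}$ for all $f\in C[0,\pi]$. Equivalently, $G_n$ does not extend to a bounded operator on $L^1[0,\pi]$. The reason is that $G_n$ depends on $f$ only through the point values $f(\theta_k^{(n)})$, and these values are not controlled by the $L^1$ norm. We therefore construct continuous functions with small $L^1$ norm and a fixed value at one node.

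Fix $n$ and a node, say $\theta_1^{(n)}=\frac{\pi}{2n}$. For $0<\varepsilon<\frac{\pi}{2n}$, let $f_\varepsilon\in C[0,\pi]$ be the tent function with $f_\varepsilon(\theta_1^{(n)})=1$, supported in $[\theta_1^{(n)}-\varepsilon,\theta_1^{(n)}+\varepsilon]$, and linear on each half. Since the nodes are spaced $\pi/n$ apart, $f_\varepsilon$ vanishes at all other nodes $\theta_k^{(n)}$, $k\ne1$. Moreover $\|f_\varepsilon\|_{L^1}=\varepsilon$. By Definition~\ref{def},
\[
G_n(f_\varepsilon)(\theta)=\tfrac12\Big\{P_1\big(\theta-\tfrac{\pi}{2n}\big)+P_1\big(\theta+\tfrac{\pi}{2n}\big)\Big\}=:Q_1(\theta),
\]
which does not depend on $\varepsilon$. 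Hence $\|G_n(f_\varepsilon)\|_{L^1}=\|Q_1\|_{L^1}$ is a fixed number, and the ratio $\|G_n f_\varepsilon\|_{L^1}/\|f_\varepsilon\|_{L^1}=\|Q_1\|_{L^1}/\varepsilon\to\infty$ as $\varepsilon\to0$. This rules out any bound, provided $\|Q_1\|_{L^1}>0$.

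The only real step, and the one needing care, is to show that $Q_1$ is not identically zero on $[0,\pi]$. One option is to use the interpolation property noted after Definition~\ref{def}: $G_n$ interpolates on $C[0,\pi]$, so $Q_1(\theta_1^{(n)})=f_\varepsilon(\theta_1^{(n)})=1$. Continuity of $Q_1$ then gives $\|Q_1\|_{L^1}>0$. As a safer alternative, in case the interpolation claim is delicate at endpoints, we can evaluate directly. We have $P_1(\theta)=\cos(n\theta)\sin\theta_1/(n(\cos\theta-\cos\theta_1))$, which is a nonzero trigonometric polynomial, being the fundamental polynomial with $P_1(\theta_1)=1$ by l'Hôpital. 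Then $Q_1(\theta)=\frac12\{P_1(\theta-\frac{\pi}{2n})+P_1(\theta+\frac{\pi}{2n})\}$ is a trigonometric polynomial. If it vanished on $[0,\pi]$, it would vanish identically. Checking at $\theta=\theta_1+\frac{\pi}{2n}=\frac{\pi}{n}$ gives $Q_1(\pi/n)=\frac12\{1+P_1(\frac{3\pi}{2n})\}$. Since $P_1$ vanishes at the other Chebyshev angles, in particular at $\theta_2=\frac{3\pi}{2n}$ when $n\ge2$, this value equals $\frac12\neq0$. For $n=1$ the computation is trivial.

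Thus, for every $n\ge2$ the operator $G_n$ is unbounded in the $L^1$ norm, which proves the statement.
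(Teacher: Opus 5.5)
Your proof is correct and is essentially the paper's argument. A narrow tent at a single node makes $G_n f$ a fixed multiple of one function $S_{k,n}$ while $\|f\|_1\to 0$; the paper uses the node $\theta_n^{(n)}$ with height $m$, and you use $\theta_1^{(n)}$ with height $1$. You also verify that $\|Q_1\|_1>0$ (your $Q_1$ is the paper's $S_{1,n}$), which the paper takes for granted.

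Drop your first justification of that positivity, though. $G_n$ does not reproduce nodal values: for $n=2$ one finds $Q_1(\theta)=\frac{1}{2}(1+\cos\theta)$, so $Q_1(\pi/4)\neq 1$. The argument to keep is your direct evaluation $Q_1(\pi/n)=\frac{1}{2}\bigl(P_1(\theta_1^{(n)})+P_1(\theta_2^{(n)})\bigr)=\frac{1}{2}$.
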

	\begin{proof}
		We will show that for any \(n,m \in \mathbb{N}\), there exists a function \(f_{n,m} \in C[0,\pi]\) such that 
		\[
		\|G_n(f_{n,m})\|_1 > m \|f_{n,m}\|_1.
		\]
		Let 
		\[
		C_n = \int_0^{\pi} \left| P_n\left(\theta - \frac{\pi}{2n}\right) + P_n\left(\theta + \frac{\pi}{2n}\right) \right| d\theta
		\]
		and let \(N \in \mathbb{N}\) be such that \(\frac{1}{2^N} < \frac{C_n}{m}\).
		
		Define \(f_{n,m} \in C[0,\pi]\) by 
		\begin{equation*}
			f_{n,m}(x) =
			\begin{cases}
				0 & \text{if } 0 \leq x \leq \frac{(2n-1)\pi}{2n} - \frac{1}{2^{N+1}}, \\
				m & \text{if } x = \frac{(2n-1)\pi}{2n}, \\
				\dfrac{m \left( 2^{N+1} x - 2^{N+1} \frac{(2n-1)\pi}{2n} + 1 \right)}{1 - 2^{N+1} \frac{(2n-1)\pi}{2n}} & \text{if } \frac{(2n-1)\pi}{2n} - \frac{1}{2^{N+1}} \leq x \leq \frac{(2n-1)\pi}{2n}, \\
				\dfrac{m \left( -2^{N+1} x + 2^{N+1} \frac{(2n-1)\pi}{2n} + 1 \right)}{1 + 2^{N+1} \frac{(2n-1)\pi}{2n}} & \text{if } \frac{(2n-1)\pi}{2n} \leq x \leq \frac{(2n-1)\pi}{2n} + \frac{1}{2^{N+1}}, \\
				0 & \text{if } \frac{(2n-1)\pi}{2n} + \frac{1}{2^{N+1}} \leq x \leq \pi.
			\end{cases}
		\end{equation*}
		
		Now we see that \(\|G_n(f_{n,m})\|_1 = \frac{m C_n}{2}\) and \(\|f_{n,m}\|_1 = \frac{m}{2^{N+1}}\). Also,
		\[
		\|G_n(f_{n,m})\|_1 = \frac{m C_n}{2} = \frac{m^2 C_n}{2 m} > \frac{m^2}{2^{N+1}} = m \|f_{n,m}\|_1.
		\]
		Thus, \(G_n\) is not continuous with respect to the \(L^1\) norm.
	\end{proof}
	Therefore, we seek modified Grünwald operators that can be extended to the space $L^p[0,\pi]$.
	
	In this article, we define a new sequence of operators using the Grünwald interpolation operators on $L^p[0,\pi]$ for $1\leq p\leq+\infty$. We call these operators as Gr\"unwald-Kantorovich operators. The operators considered here are integral variants of Gr\"unwald operators analogous to the classical Kantorovich operators. Such variants appear in the literature; for instance, for an integral variant of Lagrange-type interpolation, we may refer to \cite{campiti}. Kantorovich variants of various operators and their approximation properties have been studied in \cite{barbosu, icoz, kajla}.
	
	It is also worthwhile to note that the new sequence of operators is non-positive. This follows directly from their very construction and the non-positivity of the Lagrange interpolation operators.
	
	We observe that, like Grünwald operators, the new sequence of operators exhibits similar convergence properties not only on the spaces $C[0,\pi]$ but also in $L^p[0,\pi]$. To establish these convergence results, we first prove the boundedness of the operators and then derive the corresponding convergence results. We also obtain the rate of convergence of the Gr\"unwald-Kantorovich operators using the modulus of continuity on $C[0,\pi]$ and K-functionals in the case of $L^p[0,\pi]$. Moreover, we prove that the Gr\"unwald-Kantorovich operators are point-wise bounded by the Hardy-Littlewood maximal operator. We then extend the convergence results to certain classes of Banach function spaces on a nontrivial subspace. For this purpose, we employ non-positive Korovkin-type theorem in the setting of Banach function spaces together with the boundedness of the Hardy–Littlewood maximal operator. In particular, we obtain convergence results in various Banach function spaces, including weighted Lebesgue spaces, Grand Lebesgue spaces, weighted Grand Lebesgue spaces, Orlicz spaces, Lebesgue spaces with variable exponent and Morrey spaces.
	
	This article is organized as follows. In the next section, we introduce a new sequence of operators on $L^p[0,\pi]$. 
	We show that these operators are bounded in $C[0,\pi]$ and establish uniform convergence properties similar to those of Grünwald operators. 
	We also discuss their rate of convergence and determine the order of convergence using the modulus of continuity. 
	In the subsequent section, we prove that the sequence of operators are bounded in $L^p[0,\pi]$ and establish similar convergence results in this space. We further obtain the rate of convergence using a suitable K-functional. In the following section, we establish the point-wise boundedness of the sequence via the Hardy-Littlewood maximal function. Next, we recall Korovkin-type results obtained in the setting of Banach function spaces. Finally, we extend the convergence results to specific Banach function spaces mentioned above.
	\section{Grünwald-Kantorovich Operators}\label{section1}
	In this section, we introduce a new sequence of operators on $L^p[0,\pi]$ for $1\leq p\leq +\infty$. These operators are inspired by the classical Kantorovich operators, which are, in fact, integral variants of the classical Berntein operators \cite{kantorovich}. In a similar manner, we modify the Gr\"unwald interpolation operators to obtain their integral variants.
	
	In the present section, we restrict our attention to the space $C[0,\pi]$. We establish several properties of these operators, including boundedness, convergence and rates of convergence.
	
	The new sequence of operators is defined as follows.
	\begin{definition}
		Let $1\leq p\leq +\infty$, then $\mathcal{GK}_n:L^p[0,\pi]\to L^p[0,\pi]$ be defined as follows.
		\begin{equation}\label{gk_n}
			\mathcal{GK}_n(f)(\theta)=\frac{n}{\pi}\sum\limits_{k=1}^n \int\limits_{\theta_k^{(n)}}^{\theta_k^{(n)}+\frac{\pi}{2n}}f(t)\ dt \{P_k(\theta+\frac{\pi}{2n})+P_k(\theta-\frac{\pi}{2n})\}.
		\end{equation}
	\end{definition}
	We refer to these operators as Grünwald-Kantorovich operators.
	\begin{remark} 
	We remark that, $\mathcal{GK}_n(1)=1$ for all $n\in \mathbb{N}$, since $\frac{1}{2}\sum\limits_{k=1}^n P_k(\theta+\frac{\pi}{2n})+P_k(\theta-\frac{\pi}{2n})=1$ for all $\theta\in [0,\pi]$. We denote $S_{k,n}(\theta):=\frac{1}{2}\{P_k(\theta+\frac{\pi}{2n})+P_k(\theta-\frac{\pi}{2n})\}$. Therefore, from Lemma \ref{lem}, we have $\sum\limits_{k=1}^n |S_{k,n}(\theta)|\leq c_1$ for all $\theta\in C[0,\pi]$. 
	\end{remark}
	Now we prove the boundedness of the sequence $\{\mathcal{GK}_n\}_{n\in\mathbb{N}}$ in $C[0,\pi]$.
	\begin{lemma}\label{bdd}
		The sequence of operators $\{\mathcal{GK}_n:C[0,\pi]\to C[0,\pi]\}_{n\in\mathbb{N}}$ is bounded uniformly in the space $C[0,\pi]$.
	\end{lemma}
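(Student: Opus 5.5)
The argument is a direct sup-norm estimate that uses Lemma~\ref{lem}, in the form of the remark above, to control the sum of the fundamental functions. Two things need checking: that $\mathcal{GK}_n$ maps $C[0,\pi]$ into $C[0,\pi]$, and that $\|\mathcal{GK}_n(f)\|_\infty \le M\|f\|_\infty$ with $M$ independent of $n$.

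For the mapping property, note that for each $k$ the coefficient $a_k(f):=\frac{n}{\pi}\int_{\theta_k^{(n)}}^{\theta_k^{(n)}+\frac{\pi}{2n}} f(t)\,dt$ is a constant. Each $P_k(\theta\pm\frac{\pi}{2n})$, written in the form $\cos(n(\theta\pm\frac{\pi}{2n}))\sin\theta_k^{(n)}/(n(\cos(\theta\pm\frac{\pi}{2n})-\cos\theta_k^{(n)}))$, is a trigonometric polynomial in $\theta$. The apparent singularities are removable because $P_k$ is a Lagrange fundamental polynomial in $\cos\theta$. So $\mathcal{GK}_n(f)$ is a finite linear combination of continuous functions on $[0,\pi]$. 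The integration intervals $[\theta_k^{(n)},\theta_k^{(n)}+\frac{\pi}{2n}]$ stay inside $[0,\pi]$, since for $k=n$ the right endpoint is $\frac{(2n-1)\pi}{2n}+\frac{\pi}{2n}=\pi$. Hence $a_k(f)$ is well defined for $f\in C[0,\pi]$, and indeed for $f\in L^1[0,\pi]$.

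For the bound, each interval has length $\frac{\pi}{2n}$, so
\[
|a_k(f)|\le \frac{n}{\pi}\cdot\frac{\pi}{2n}\|f\|_\infty=\frac12\|f\|_\infty .
\]
Writing $\mathcal{GK}_n(f)(\theta)=2\sum_{k=1}^n a_k(f)S_{k,n}(\theta)$, we get for every $\theta\in[0,\pi]$
\[
|\mathcal{GK}_n(f)(\theta)|\le 2\cdot\frac12\|f\|_\infty\sum_{k=1}^n|S_{k,n}(\theta)|\le c_1\|f\|_\infty ,
\]
by Lemma~\ref{lem}. Therefore $\|\mathcal{GK}_n\|_{C\to C}\le c_1$ for all $n$, which is the uniform boundedness.

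There is no real obstacle once Lemma~\ref{lem} is available. The only points needing care are the normalization constant ($\frac{n}{\pi}$ times an interval of length $\frac{\pi}{2n}$ gives exactly $\frac12$, which matches the $\frac12$ in $S_{k,n}$) and the continuity of the shifted fundamental functions at points where $\cos(\theta\pm\frac{\pi}{2n})=\cos\theta_k^{(n)}$. The latter I would settle by noting that $P_k(\cos\cdot)$ is a polynomial in $\cos\theta$ of degree $n-1$.
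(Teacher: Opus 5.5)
Your proof is correct and is essentially the paper's argument: you bound each local average $\frac{n}{\pi}\int_{\theta_k^{(n)}}^{\theta_k^{(n)}+\frac{\pi}{2n}}f(t)\,dt$ by $\frac12\|f\|_\infty$, then use Lemma~\ref{lem} to get $\|\mathcal{GK}_n(f)\|_\infty\le c_1\|f\|_\infty$ uniformly in $n$. Your extra check that $\mathcal{GK}_n(f)$ is continuous is a point the paper leaves implicit; it holds because each shifted $P_k$ is a polynomial in $\cos(\theta\pm\frac{\pi}{2n})$ and hence a trigonometric polynomial.
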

	\begin{proof}
		Let $\theta\in [0,\pi]$. We have 
		\begin{align*}
			|\mathcal{GK}_n(f)(\theta)|&\leq\frac{ 2n}{\pi}\sum\limits_{k=1}^n \int\limits_{\theta_k^{(n)}}^{\theta_k^{(n)}+\frac{\pi}{2n}}|f(t)|\ dt |S_{k,n}(\theta)|\\
			&\leq \|f\|_\infty \frac{2n}{\pi}\frac{\pi}{ 2n}\sum\limits_{k=1}^n|S_{k,n}(\theta)|\\
			&=\|f\|_\infty \sum\limits_{k=1}^n|S_{k,n}(\theta)|\leq c_1\|f\|_\infty.
		\end{align*}
		Taking supreme over $[0,\pi]$, we obtain $\|\mathcal{GK}_n(f)\|_\infty\leq c_1\|f\|_\infty$. As already seen, $c_1$ is independent of $n$ and $f$.
	\end{proof}
	In what follows, we establish the convergence and prove a convergence rate for the Grünwald-Kantorovich operators in the space $C[0,\pi]$.
	\begin{theorem}\label{main}
		For all $f\in C[0,\pi]$, we have 
		\[
		\lim\limits_{n\to\infty}\|\mathcal{GK}_n(f)-f\|_\infty= 0,
		\]
		where the sup-norm $\|.\|_\infty$ is taken over $[0,\pi]$.
	\end{theorem}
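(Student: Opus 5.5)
The plan is to compare $\mathcal{GK}_n$ with the Gr\"unwald operators $G_n$ of Definition~\ref{def}, whose uniform convergence is already available, and to control the difference by the modulus of continuity of $f$.

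\textbf{Step 1: rewrite $\mathcal{GK}_n$.} Using $S_{k,n}(\theta)=\frac12\{P_k(\theta+\frac{\pi}{2n})+P_k(\theta-\frac{\pi}{2n})\}$, definition \eqref{gk_n} becomes
\[
\mathcal{GK}_n(f)(\theta)=\sum_{k=1}^n A_{k,n}(f)\,S_{k,n}(\theta),\qquad A_{k,n}(f):=\frac{2n}{\pi}\int_{\theta_k^{(n)}}^{\theta_k^{(n)}+\frac{\pi}{2n}}f(t)\,dt .
\]
So $A_{k,n}(f)$ is the mean value of $f$ over an interval of length $\frac{\pi}{2n}$ starting at $\theta_k^{(n)}$. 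All these intervals lie in $[0,\pi]$, since $\theta_n^{(n)}+\frac{\pi}{2n}=\pi$. By comparison, $G_n(f)(\theta)=\sum_k f(\theta_k^{(n)})S_{k,n}(\theta)$.

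\textbf{Step 2: estimate the difference.} For $f\in C[0,\pi]$ let $\omega(f,\delta)$ denote its modulus of continuity. Then
\[
|A_{k,n}(f)-f(\theta_k^{(n)})|\le \frac{2n}{\pi}\int_{\theta_k^{(n)}}^{\theta_k^{(n)}+\frac{\pi}{2n}}|f(t)-f(\theta_k^{(n)})|\,dt\le \omega\Big(f,\frac{\pi}{2n}\Big).
\]
By Lemma~\ref{lem}, in the form $\sum_k|S_{k,n}(\theta)|\le c_1$ from the Remark, this gives
\[
\|\mathcal{GK}_n(f)-G_n(f)\|_\infty\le c_1\,\omega\Big(f,\frac{\pi}{2n}\Big).
\]
The right-hand side tends to $0$ by uniform continuity of $f$ on $[0,\pi]$.

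\textbf{Step 3: convergence of $G_n(f)$.} It remains to show $\|G_n(f)-f\|_\infty\to0$ for every $f\in C[0,\pi]$. The paper says this follows from Gr\"unwald's theorem, but it can also be derived directly from Theorem~\ref{grun}, and this is the step I would check most carefully. Since $\cos:[0,\pi]\to[-1,1]$ is a homeomorphism, set $g:=f\circ\arccos\in C[-1,1]$, so that $f=g\circ\cos$ and $g(\cos\theta_k^{(n)})=f(\theta_k^{(n)})$. With this choice, $G_n(f)(\theta)$ coincides with $\frac12\{L_n(g)(\theta-\frac{\pi}{2n})+L_n(g)(\theta+\frac{\pi}{2n})\}$. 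Theorem~\ref{grun} then gives uniform convergence to $g(\cos\theta)=f(\theta)$ on $[0,\pi]$.

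\textbf{Conclusion.} The triangle inequality gives
\[
\|\mathcal{GK}_n(f)-f\|_\infty\le c_1\,\omega\Big(f,\frac{\pi}{2n}\Big)+\|G_n(f)-f\|_\infty\longrightarrow0 .
\]
The only delicate point is the identification in Step 3, namely that the Lagrange operator evaluated at shifted angles is exactly the expression defining $G_n$. That is a direct check from the trigonometric form of $L_n$.
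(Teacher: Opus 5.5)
Your proof is correct and uses the same decomposition as the paper: insert $f(\theta_k^{(n)})$, bound the averaging error by $c_1\,\omega(f,\frac{\pi}{2n})$ via Lemma~\ref{lem}, and handle the remaining term through Gr\"unwald's result. The difference is that you keep that remaining term signed, so it is exactly $G_n(f)-f$ (as in the paper's $E_1+E_2$ split for the rate of convergence), and your observation $f=(f\circ\arccos)\circ\cos$ lets you apply Theorem~\ref{grun} as stated; the paper's proof instead puts absolute values inside the sum, so it needs the stronger estimate $\sup_{\theta}\sum_{k}|f(\theta_k^{(n)})-f(\theta)|\,|S_{k,n}(\theta)|\to 0$ taken from inside the proof of Theorem~\ref{grun}.
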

	\begin{proof}
		Let $f\in C[0,\pi]$ and $\theta\in [0,\pi]$. Then $f(\theta)$ can be expressed as 
		\[
		f(\theta)=\frac{2n}{\pi}\sum\limits_{k=1}^n \int\limits_{\theta_k^{(n)}}^{\theta_k^{(n)}+\frac{\pi}{ 2n}}f(\theta)\ dt S_{k,n}(\theta).
		\]
		By a change of variable, $\int\limits_{\theta_k^{(n)}}^{\theta_k^{(n)}+\frac{\pi}{2n}}f( t)\ dt=\int\limits_{0}^{\frac{\pi}{2n}}f(t+\theta_k^{(n)})\ dt$, from which we have 
		\[
		\mathcal{GK}_n(f)(\theta)-f(\theta)=\frac{ 2n}{\pi}\int\limits_{0}^{\frac{\pi}{2n}}\sum\limits_{k=1}^n (f(t)-f(\theta)) S_{k,n}(\theta)\ dt.
		\]
		Thus, we have
		\begin{align*}
			|\mathcal{GK}_n(f)(\theta)-f(\theta)|
			&=\frac{2n}{\pi}\sum\limits_{k=1}^n \int\limits_{\theta_k^{(n)}}^{\theta_k^{(n)}+\frac{\pi}{2n}}|f(t)-f(\theta)|\ dt |S_{k,n}(\theta)|\\
			&=\frac{2n}{\pi}\int\limits_{0}^{\frac{\pi}{ 2n}}\sum\limits_{k=1}^n |f(t+\theta_k^{(n)})-f(\theta)| |S_{k,n}(\theta)|\ dt\\
			&\leq \frac{2n}{\pi}\int\limits_{0}^{\frac{\pi}{ 2n}}\sum\limits_{k=1}^n |f(t+\theta_k^{(n)})-f( \theta_k^{(n)})| |S_{k,n}(\theta)|\ dt+\\
			&\qquad\qquad \frac{2n}{\pi}\int\limits_{0}^{\frac{\pi}{ 2n}}\sum\limits_{k=1}^n |f(\theta_k^{(n)})-f(\theta)| |S_{k,n}(\theta)|\ dt	\\
			& = A_1+A_2.
		\end{align*}
		Now, we consider $A_1$. Since $f$ is uniformly continuous on $[0,\pi]$, for a given $\epsilon>0$, for sufficiently large $n$, whenever $|t|\leq \frac{\pi}{n}$, we have 
		\[
		|f(t+\theta_k^{(n)})-f(\theta_k^{(n)})|<\frac{\epsilon}{2c_1}, 
		\] 
		for all $k=1,2,\ldots, n$. Therefore , we have $A_1\leq \frac{\epsilon}{2}$. Also, from the proof of Theorem \ref{grun} in \cite{grunwald}, for sufficiently large $n$, we have
		\[ 
		\sup\limits_{\theta\in [0,\pi]}\sum\limits_{k=1}^n |f(\theta_k^{(n)})-f(\theta)| |S_{k,n}(\theta)|\leq \frac{\epsilon}{2}.
		\]
		Therefore, $A_2\leq \frac{\epsilon}{2}$.
		Thus, we obtain $\|\mathcal{GK}_n(f)-f\|_\infty\leq \epsilon.$ 
		Hence, the result.
	\end{proof}
	\subsection{Rate of convergence}
	In this section, we determine the order of convergence for the Grünwald- Kantorovich operators. To this end, we employ the modulus of continuity. The modulus of continuity of a bounded function $f:I \to \mathbb{R}$ on a real interval $I$, with argument $\delta > 0$, is defined by
	\[
	\omega(f,\delta) = \sup\{\,|f(x)-f(y)| : |x-y| \leq \delta,\; x,y \in I\,\},
	\]
	which may be denoted simply by $\omega(\delta)$. 
	
	We now proceed to prove the result.
	\begin{theorem}
		Let $f\in C[0,\pi]$ and $\theta\in [0,\pi]$, then 
		\[
		|\mathcal{GK}_n(f)(\theta)-f( \theta)|=\mathcal{O}(\omega(f\circ \arccos,\frac{\sqrt{1-x^2}}{n})+\omega(f\circ \arccos, \frac{1}{n^2})+\omega(f,\frac{\pi}{n})),
		\]
		where $x=\cos\theta$.
	\end{theorem}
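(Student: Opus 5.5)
Following the proof of Theorem~\ref{main}, I split
\[
|\mathcal{GK}_n(f)(\theta)-f(\theta)|\le A_1+A_2,
\]
where
\begin{align*}
A_1&=\frac{2n}{\pi}\int_0^{\frac{\pi}{2n}}\sum_{k=1}^n |f(t+\theta_k^{(n)})-f(\theta_k^{(n)})|\,|S_{k,n}(\theta)|\,dt,\\
A_2&=\sum_{k=1}^n |f(\theta_k^{(n)})-f(\theta)|\,|S_{k,n}(\theta)|.
\end{align*}
The two terms are estimated separately.

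\textbf{The term $A_1$.} Here $0\le t\le \frac{\pi}{2n}\le\frac{\pi}{n}$, so each difference is at most $\omega(f,\frac{\pi}{n})$. Together with $\sum_k|S_{k,n}(\theta)|\le c_1$ from Lemma~\ref{lem}, this gives $A_1\le c_1\,\omega(f,\frac{\pi}{n})$. This produces the third term of the estimate.

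\textbf{The term $A_2$.} This is exactly the pointwise error of Gr\"unwald's operator $G_n$ applied to $f$. Writing $g=f\circ\arccos\in C[-1,1]$, it equals
\[
\sum_k |g(x_k)-g(x)|\,|S_{k,n}(\theta)|, \qquad x=\cos\theta,\quad x_k=\cos\theta_k^{(n)}.
\]
I will use the subadditivity bound $\omega(g,\lambda\delta)\le(1+\lambda)\omega(g,\delta)$ with $\delta=\delta_n(x)=\frac{\sqrt{1-x^2}}{n}+\frac{1}{n^2}$. This yields
\[
A_2\le \omega(g,\delta_n(x))\sum_k\Bigl(1+\frac{|x-x_k|}{\delta_n(x)}\Bigr)|S_{k,n}(\theta)|.
\]
It then suffices to prove the key estimate
\[
\sum_{k=1}^n |x-x_k|\,|S_{k,n}(\theta)|\le C\,\delta_n(x).
\]
Granting it, subadditivity $\omega(g,a+b)\le\omega(g,a)+\omega(g,b)$ gives
\[
A_2=\mathcal O\Bigl(\omega\bigl(g,\tfrac{\sqrt{1-x^2}}{n}\bigr)+\omega\bigl(g,\tfrac1{n^2}\bigr)\Bigr),
\]
as stated.

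\textbf{The key estimate.} I expect this to be the main obstacle, and I would follow Gr\"unwald's computation. First, use $x-x_k=-2\sin\frac{\theta+\theta_k}{2}\sin\frac{\theta-\theta_k}{2}$. Then expand $S_{k,n}$ using $\cos n(\theta\pm\frac{\pi}{2n})=\mp\sin n\theta$, which gives
\[
S_{k,n}(\theta)=\frac{(-1)^{k}\sin n\theta\,\sin\theta_k}{2n}\Bigl(\frac{1}{\cos(\theta+\frac{\pi}{2n})-\cos\theta_k}-\frac{1}{\cos(\theta-\frac{\pi}{2n})-\cos\theta_k}\Bigr).
\]
Combining the two fractions, the difference has numerator of size $\frac1n\sin\theta$ (roughly $2\sin\theta\sin\frac{\pi}{2n}$). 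The denominator is a product of two factors, each comparable to $|\theta-\theta_k|(\theta+\theta_k)$ (up to the boundary reflection $2\pi-\theta-\theta_k$). This gives
\[
|S_{k,n}(\theta)|\lesssim \frac{\sin\theta\,\sin\theta_k}{n^2\,(\theta+\theta_k)^2(\theta-\theta_k)^2},
\]
away from the one or two indices with $|\theta-\theta_k|\lesssim 1/n$; for those indices $|S_{k,n}|\le c_1$ directly. Multiplying by $|x-x_k|\approx|\theta-\theta_k|(\theta+\theta_k)$ leaves a sum of the form
\[
\frac{\sin\theta}{n^2}\sum_k \frac{\sin\theta_k}{(\theta+\theta_k)|\theta-\theta_k|}.
\]
Here $\sin\theta_k/(\theta+\theta_k)$ is bounded, and $\sum_k 1/|\theta-\theta_k|\lesssim n\log n$. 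The logarithm is the delicate point: it must be absorbed by a sharper treatment using the quadratic decay in $|\theta-\theta_k|$ before multiplication. Concretely, $|x-x_k|\le|\theta-\theta_k|(\sin\theta+|\theta-\theta_k|)$, so the sum splits into $\sin\theta\sum_k|\theta-\theta_k|\,|S_{k,n}|$ and $\sum_k(\theta-\theta_k)^2|S_{k,n}|$. Using the $n^{-2}(\theta-\theta_k)^{-2}$ decay, I expect these to be $O(\sin\theta/n)$ and $O(1/n^2)$ respectively, with near-node terms handled separately. By symmetry $\theta\mapsto\pi-\theta$, it suffices to treat $\theta\in[0,\pi/2]$.
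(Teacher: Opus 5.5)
Your bound $A_1\le c_1\,\omega(f,\frac{\pi}{n})$ is correct and is exactly the paper's estimate for $E_1$. The gap is in $A_2$. You say $A_2=\sum_k|f(\theta_k^{(n)})-f(\theta)|\,|S_{k,n}(\theta)|$ ``is exactly the pointwise error of $G_n$''. It is not: that error is $|G_n(f)(\theta)-f(\theta)|=\bigl|\sum_k(f(\theta_k^{(n)})-f(\theta))S_{k,n}(\theta)\bigr|$. Moving the modulus inside destroys the cancellation that a Jackson--Timan type bound for $G_n$ relies on.

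As a result, your key estimate $\sum_k|x-x_k|\,|S_{k,n}(\theta)|\le C\delta_n(x)$ is false by a factor $\log n$. Take $n$ odd and $\theta=\frac{\pi}{2}$. Then $x=0$, $|\sin n\theta|=1$, and $\frac{\pi}{2}=\theta_{k_0}^{(n)}$ with $k_0=\frac{n+1}{2}$. Your own formula for $S_{k,n}$ gives, for $k=k_0+m$ with $1\le|m|\le\frac n4$,
\[
|S_{k,n}(\tfrac{\pi}{2})|=\frac{\cos\frac{m\pi}{n}\,\sin\frac{\pi}{2n}}{n\bigl(\sin^2\frac{m\pi}{n}-\sin^2\frac{\pi}{2n}\bigr)}\ge\frac{1}{\sqrt2\,\pi^2m^2},\qquad |x-x_k|=\sin\frac{|m|\pi}{n}\ge\frac{2|m|}{n}.
\]
Hence $\sum_k|x-x_k|\,|S_{k,n}(\frac{\pi}{2})|\ge\frac{2\sqrt2}{\pi^2n}\sum_{m=1}^{\lfloor n/4\rfloor}\frac1m$, which is of order $\frac{\log n}{n}$, whereas $\delta_n(0)\le\frac2n$. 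Your proposed rescue does not remove the logarithm. The decay $n^{-2}(\theta-\theta_k^{(n)})^{-2}$ is exactly borderline for the first moment. So $\sin\theta\sum_k|\theta-\theta_k^{(n)}|\,|S_{k,n}(\theta)|$ is itself of order $\sin\theta\,\frac{\log n}{n}$ at interior points; this is the analogue of the logarithm in Proposition~\ref{propos}(i) for $p=1$. Also, $\sum_k(\theta-\theta_k^{(n)})^2|S_{k,n}(\theta)|$ is of order $\frac1n$ there, not $\frac1{n^2}$.

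This is a defect of the decomposition, not just of the lemma. Take $f=\cos$, so $g(x)=x$ and $\omega(g,\delta)=\delta$. At $\theta=\frac\pi2$ with $n$ odd, your $A_2$ is the sum above, of order $\frac{\log n}{n}$. The right-hand side of the theorem there is at most $\frac{1+\pi}{n}+\frac1{n^2}$. Meanwhile the signed error vanishes at that point: $L_n$ reproduces $x$, so $G_n(\cos)(\theta)=\cos\theta\cos\frac{\pi}{2n}$. So the inequality $|\mathcal{GK}_n(f)(\theta)-f(\theta)|\le A_1+A_2$ cannot give the stated rate.

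The paper avoids this by keeping the second piece signed,
\[
E_2=\frac{2n}{\pi}\int_0^{\pi/2n}(G_n(f)(\theta)-f(\theta))\,dt=G_n(f)(\theta)-f(\theta),
\]
and quoting Mills' pointwise estimate for Gr\"unwald's operator \cite{mills1}. A self-contained proof needs an argument that exploits cancellation. One option:
\begin{itemize}
\item approximate $g$ by a polynomial $p$ of degree $<n$ with Timan-type pointwise bounds;
\item use $L_n(p)=p$, so that $G_n(p\circ\cos)(\theta)-p(\cos\theta)=\frac12\bigl[p(\cos(\theta+\frac{\pi}{2n}))+p(\cos(\theta-\frac{\pi}{2n}))\bigr]-p(\cos\theta)$ is a symmetric second difference with step $\frac{\pi}{2n}$;
\item estimate with absolute values only the remainder $g-p$.
\end{itemize}
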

	\begin{proof}
		We have	
		\begin{align*}
			\mathcal{GK}_n(f)(\theta)-f(\theta)&=\frac{2n}{\pi}\int\limits_{0}^{\frac{\pi}{ 2n}}\sum\limits_{k=1}^n f(t+\theta_k^{(n)})-f( \theta_k^{(n)}) S_{k,n}(\theta)\ dt+\\ &\qquad \frac{2n}{\pi}\int\limits_{0}^{\frac{\pi}{2n}}\sum\limits_{k=1}^n f(\theta_k^{(n)})-f(\theta) S_{k,n}(\theta)\ dt\\
			&=E_1+E_2.
		\end{align*} 
		$E_2$ can be expressed as,
		$
		E_2=\frac{2n}{\pi}\int\limits_{0}^{\frac{\pi}{ 2n}}(G_n(f)(\theta)-f(\theta))\ dt. 
		$
		From \cite{mills1}, we have $|G_n(f)(\theta)-f(\theta)|=\mathcal{O}(\omega(f\circ \arccos,\frac{\sqrt{1-x^2}}{n})+\omega(f\circ \arccos, \frac{1}{n^2}))$, where $x=\cos\theta$. Thus, $|E_2|=\mathcal{O}(\omega(f\circ\arccos, \frac{\sqrt{1-x^2}}{n})+\omega(f\circ\arccos, \frac{1}{n^2}))$. Now, consider $E_1$. By the properties of the modulus of continuity, we have 
		\[
		|f(t+\theta_k^{(n)})-f(\theta_k^{(n)})|\leq \omega(f, |t|)\leq \omega(f, \frac{\pi}{n}),
		\]
		for $k=1,2,\ldots, n$. Using Lemma \ref{lem}, we have  $|E_2|=\mathcal{O}(\omega(f,\frac{\pi}{n})).$
	\end{proof} 
	In the following section, we discuss the extension of the  Grünwald-Kantorovich operators to the space $L^p[0,\pi]$ for $1\leq p<+\infty$ and establish their convergence properties.
	\section{Convergence in the $L^p$-space}
	In this section, we prove the uniform boundedness property of the Gr\"unwald Kantorovich operators on the space $L^p[0.\pi]$ for $1\leq p<+\infty$.
	
	First, we establish the uniform boundedness of $\{\mathcal{GK}_n\}_{n\in\mathbb{N}}$ in the space $L^1[0,\pi]$. Then, we extend this property to the space $L^p[0,\pi]$. Before proving this, we have the following proposition which is crucial in obtaining our main results.
	\begin{proposition}\label{propos}
		For all $k=1,2,\ldots,n$, we have
		\begin{enumerate}
			\item[(i)] \[
			\left (\int\limits_{0}^{\pi} |\theta-\theta_k^{(n)}|^p \, |S_{k,n}(\theta)|^p \, d\theta\right )^\frac{1}{p}
			=
			\begin{cases}
				\mathcal{O}\!\left(\dfrac{\log n + 1}{n^2}\right), & p = 1, \\[8pt]
				\mathcal{O}\!\left(\dfrac{1}{n^{1+\frac{1}{p}}}\right), & 1<p<+\infty.
			\end{cases}
			\]
			\item[(ii)]
			\[
			\left (\int\limits_{0}^{\pi} |S_{k,n}(\theta)|^p \, d\theta\right )^\frac{1}{p}
			=\mathcal{O}\left (\frac{1}{n^\frac{1}{p}}\right ), \quad 1\leq p<+\infty.
\			\]
		
		\end{enumerate}
	\end{proposition}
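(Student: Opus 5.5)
The core of the plan is a pointwise bound on $S_{k,n}$:
\[
|S_{k,n}(\theta)|\le \frac{C}{1+n^2(\theta-\theta_k^{(n)})^2},\qquad \theta\in[0,\pi],
\]
with $C$ absolute. This is the kind of estimate behind Lemma~\ref{lem}, which is its summed form. Granting it, both parts follow by direct integration. For (i) we bound $|\theta-\theta_k^{(n)}|^p|S_{k,n}(\theta)|^p$ by $C^p\, u^p/(1+n^2u^2)^p$ with $u=|\theta-\theta_k^{(n)}|\le\pi$, and substitute $v=nu$. This gives
\[
\int_0^\pi |\theta-\theta_k^{(n)}|^p|S_{k,n}(\theta)|^p\,d\theta\le \frac{2C^p}{n^{p+1}}\int_0^{n\pi}\frac{v^p}{(1+v^2)^p}\,dv .
\]
For $p>1$ the integrand decays like $v^{-p}$, so the integral is bounded and we get $\mathcal O(n^{-(1+1/p)})$ after taking the $p$-th root. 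For $p=1$ the integral is $\tfrac12\log(1+n^2\pi^2)=\mathcal O(\log n+1)$, giving $\mathcal O((\log n+1)/n^2)$. For (ii) the same substitution gives $\int_0^\pi|S_{k,n}|^p\le \frac{2C^p}{n}\int_0^\infty (1+v^2)^{-p}\,dv$, which is finite for all $p\ge1$. This yields $\mathcal O(n^{-1/p})$.

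It remains to prove the pointwise bound. Write $P_k(\varphi)=(-1)^{k+1}\dfrac{\cos(n\varphi)\sin\theta_k}{n(\cos\varphi-\cos\theta_k)}$ and use
\[
\cos\varphi-\cos\theta_k=-2\sin\tfrac{\varphi+\theta_k}{2}\sin\tfrac{\varphi-\theta_k}{2}.
\]
At the shifted points $\varphi=\theta\pm\frac{\pi}{2n}$ we have $\cos(n\varphi)=\cos(n\theta\pm\pi/2)=\mp\sin(n\theta)$. Hence
\[
2S_{k,n}(\theta)=(-1)^{k}\frac{\sin(n\theta)\sin\theta_k}{n}\Big[\frac{1}{\cos(\theta+\frac{\pi}{2n})-\cos\theta_k}-\frac{1}{\cos(\theta-\frac{\pi}{2n})-\cos\theta_k}\Big].
\]
The bracket is a difference quotient of $g(\varphi)=1/(\cos\varphi-\cos\theta_k)$ over a step $\pi/n$. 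So it equals $-\frac{\pi}{n}\,g'(\xi)$ for some $\xi$ between the two shifted points, with
\[
g'(\xi)=\frac{\sin\xi}{(\cos\xi-\cos\theta_k)^2}.
\]
This produces the quadratic decay. In the far region $|\theta-\theta_k|\ge 2\pi/n$, the quantities $|\sin\tfrac{\xi-\theta_k}{2}|$ and $|\sin\tfrac{\theta-\theta_k}{2}|$ are comparable. This gives
\[
|S_{k,n}(\theta)|\lesssim \frac{\sin\theta_k\,\sin\xi}{n^2\sin^2\frac{\xi+\theta_k}{2}\,(\theta-\theta_k)^2}.
\]
Since $\sin\theta_k\sin\xi\le \sin^2\frac{\xi+\theta_k}{2}$ (a concavity / AM–GM type inequality for $\sin$ on $[0,\pi]$), this is $\lesssim 1/(n^2(\theta-\theta_k)^2)$. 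In the near region $|\theta-\theta_k|<2\pi/n$, we need $|S_{k,n}|\le C$. Here we use that $|P_k(\varphi)|\le 1$ for Chebyshev nodes whenever $\varphi\in[0,\pi]$. This follows since $|T_n(x)/((x-x_k)T_n'(x_k))|\le1$ on $[-1,1]$, a standard property of the Chebyshev fundamental polynomials. When $\varphi=\theta\pm\pi/(2n)$ slightly leaves $[0,\pi]$, we use evenness and periodicity of $P_k$ as a function of $\varphi$ instead.

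\textbf{Main obstacle.} The hard step is making the far-region estimate uniform near the endpoints $\theta\approx0,\pi$, and for nodes $\theta_k$ close to the endpoints. There the factor $\sin\tfrac{\xi+\theta_k}{2}$ can be small, and the shifted points may lie outside $[0,\pi]$. If the inequality $\sin\theta_k\sin\xi\le\sin^2\frac{\xi+\theta_k}2$ is not enough there, the fallback is to use the symmetry $\theta\mapsto\pi-\theta$, $k\mapsto n+1-k$ to reduce to $\theta\le\pi/2$. Then $\sin$ can be replaced by its argument up to absolute constants, which makes the ratio explicitly bounded.
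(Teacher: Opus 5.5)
Your argument is essentially correct, though one justification is false (below). It takes a different route from the paper.

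The paper derives no decay estimate of its own. It quotes Grünwald's bound $|S_{k,n}(\theta)|\le\frac{\pi^3}{4n^2}\bigl(\theta-\theta_k^{(n)}-\frac{\pi}{2n}\bigr)^{-2}$ and combines it with $|S_{k,n}|\le c_1$ near $\theta_k^{(n)}$. It then integrates region by region: two regions for (i), three for (ii), plus a binomial-series expansion of $|\theta-\theta_k^{(n)}|^p$ when $p>1$.

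You instead derive the quadratic decay yourself, from $\cos\bigl(n\theta\pm\frac{\pi}{2}\bigr)=\mp\sin n\theta$ and the mean value theorem. You merge the near and far estimates into the single majorant $C/(1+n^2(\theta-\theta_k^{(n)})^2)$, and get every case of (i) and (ii) from the one substitution $v=n|\theta-\theta_k^{(n)}|$. This is self-contained and uniform in $p$, and it makes the series argument unnecessary. The paper's proof is shorter only because it outsources the pointwise estimate.

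Your far-region computation is sound. The mean value theorem applies there because $[\theta-\frac{\pi}{2n},\theta+\frac{\pi}{2n}]$ contains no zero $\pm\theta_k^{(n)}$ (mod $2\pi$) of $\cos\varphi-\cos\theta_k^{(n)}$. The only borderline cases, $k=1,\theta=0$ and $k=n,\theta=\pi$, lie in the near region.

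The endpoint obstacle you flag is also mild. Since $\sin^2\frac{a+b}{2}-\sin a\sin b=\sin^2\frac{a-b}{2}\ge 0$ for all real $a,b$, your inequality holds whenever $\xi\in[0,\pi]$, with no concavity argument needed. The point $\xi$ can leave $[0,\pi]$ only when $\theta$ is within $\frac{\pi}{2n}$ of an endpoint. In that case, in the far region near $0$, $|\sin\xi|\le\frac{\pi}{2n}\le\frac14\theta_k^{(n)}$ while $\sin\frac{\xi+\theta_k^{(n)}}{2}\ge\frac{3}{4\pi}\theta_k^{(n)}$. The case near $\pi$ is symmetric, so the ratio stays bounded.

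\textbf{The false step.} Your claim $|P_k(\varphi)|\le 1$, i.e.\ $|\ell_k(x)|\le 1$ on $[-1,1]$, is not a property of Chebyshev nodes of the first kind. At $x=1$ one has $\ell_1(1)=\frac1n\cot\frac{\pi}{4n}$. For $n=2$ this equals $\frac12+\frac{1}{\sqrt2}>1$, and as $n\to\infty$ it tends to $\frac{4}{\pi}>1$.

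You only need an absolute constant in the near region, and that is immediate from Lemma~\ref{lem}: $|S_{k,n}(\theta)|\le\sum_{j=1}^n|S_{j,n}(\theta)|\le c_1$. This is exactly what the paper uses. With that replacement, and the endpoint remark above, your proof is complete.
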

	\begin{proof}
		First, we prove $(i)$.
		For $\theta\neq \theta_k^{(n)}+\frac{\pi}{2n}$, $k=1,2,\ldots, n$ we have the the following inequlity from \cite{grunwald},
		\[
		|S_{k,n}(\theta)|\leq \frac{\pi^3}{4n^2}\frac{1}{(\theta-\theta_k^{(n)}-\frac{\pi}{2n})^2}.
		\]
		We split the interval $[0,\pi]$ as into two, that is, $|\theta-\theta_k^{(n)}|>\frac{\pi}{n}$ and $|\theta-\theta_k^{(n)}|\leq \frac{\pi}{n}$.
		We have 
		\begin{align*}
			\int\limits_{|\theta-\theta_k^{(n)}|>\frac{\pi}{n}} \frac{|\theta_k^{(n)}-\theta|} {{(\theta-\theta_k^{(n)}-\frac{\pi}{2n})^2}}\ d\theta 
			&\leq	4\int\limits_{|\theta-\theta_k^{(n)}|>\frac{\pi}{n}}\frac{1}{|\theta-\theta_k^{(n)}|}\ d\theta=\mathcal{O}(\log n).
		\end{align*}
		Therefore, we have 
		\[
		\int\limits_{|\theta-\theta_k^{(n)}|>\frac{\pi}{n}} |\theta-\theta_k^{(n)}| \, |S_{k,n}(\theta)|\, d\theta\leq \frac{\pi^3}{4n^2}\mathcal{O}(\log n)=\mathcal{O}(\frac{\log n}{n^2}).
		\]
		Since $|S_k,n(\theta)|\leq c_1$ for all $\theta\in [0,1]$, we have 
		\[
		\int\limits_{|\theta-\theta_k^{(n)}|\leq \frac{\pi}{n}} |\theta_k^{(n)}-\theta||S_{k,n}(\theta)|\ d\theta \leq \mathcal{O}(\frac{1}{n^2}).
		\]
		Therefore, we have 
		\[
		\int\limits_{0}^{\pi} |\theta-\theta_k^{(n)}| \, |S_{k,n}(\theta)|\, d\theta=\mathcal{O}\left (\frac{1+\log n}{n^2}\right ).
		\]
		Hence, the first part is proved. 
		To prove the second part, let $p>1$ and consider the integral 
		\begin{align*}
			\int\limits_{|\theta-\theta_k^{(n)}|>\frac{\pi}{n}}|\theta-\theta_k^{(n)}|^p|S_{k,n}(\theta)|^p\ d\theta\leq \frac{\pi^3}{4n^2}\int\limits_{|\theta-\theta_k^{(n)}|>\frac{\pi}{n}}\frac{|\theta-\theta_k^{(n)}|^p}{|\theta-\theta_k^{(n)}-\frac{\pi}{2n}|^{2p}}\ d\theta.
		\end{align*}
		By using binomial expansion, for $p>1$, we have 
		\[
		|\theta-\theta_k^{(n)}|^p
		\leq 
		\sum_{j=0}^{\infty}
		\frac{|p(p-1)\cdots(p-j+1)|}{j!}
		\left(\frac{\pi}{2n}\right)^j
		\left |\theta-\theta_k^{(n)}-\frac{\pi}{2n}\right |^{p-j}.
		\] 
		Therefore, we have 
		\[
		\frac{|\theta-\theta_k^{(n)}|^p}{|\theta-\theta_k^{(n)}-\frac{\pi}{2n}|^{2p}}
		\leq 
		\sum_{j=0}^{\infty}
		\frac{|p(p-1)\cdots(p-j+1)|}{j!}
		\left(\frac{\pi}{2n}\right)^j
		\left |\theta-\theta_k^{(n)}-\frac{\pi}{2n}\right |^{-p-j}.
		\] 
		Then, for all $j=0,1,\ldots$, we have 
		\begin{align*}
			\left (\frac{\pi}{2n}\right )^{j} \int\limits_{|\theta-\theta_k^{(n)}|>\frac{\pi}{n}}\left|\theta-\theta_k^{(n)}-\frac{\pi}{2n}\right |^{-p-j}\ d\theta&\leq \frac{2}{p+j-1}\left (\frac{\pi}{2n}\right )^{j}\left (\frac{\pi}{2n}\right )^{1-(p+j)}\\
			&=\frac{2}{p+j-1}\left (\frac{\pi}{2n}\right )^{1-p}.
		\end{align*}
		Let \(p>1\) and \(j\in\mathbb{N}\). Then, we get
		\[
		\left|p(p-1)\cdots(p-j+1)\right|
		= \prod_{m=0}^{j-1} |p-m|.
		\]
		Let \(N=\lceil p \rceil\). Splitting the product, we obtain
		\[
		\prod_{m=0}^{j-1} |p-m|
		= \left(\prod_{m=0}^{N} |p-m|\right)
		\left(\prod_{m=N+1}^{j-1} |p-m|\right).
		\]
		For \(m>N\), we have \(|p-m|=m-p\le m\), hence
		\[
		\prod_{m=N+1}^{j-1} |p-m|
		\le \prod_{m=N+1}^{j-1} m
		= \frac{(j-1)!}{N!}.
		\]
		Thus, we get
		\[
		\left|p(p-1)\cdots(p-j+1)\right|
		\le C_p (j-1)!,
		\]
		for some constant \(C_p>0\).
		Now, we have 
		\begin{align*}
			\int\limits_{|\theta-\theta_k^{(n)}|>\frac{\pi}{n}}\frac{|\theta-\theta_k^{(n)}|^p}{|\theta-\theta_k^{(n)}-\frac{\pi}{2n}|^{2p}}\ d\theta
			&\leq 
			\left (\frac{\pi}{2n}\right )^{1-p}\sum_{j=0}^{\infty}
			\frac{|p(p-1)\cdots(p-j+1)|}{(p+j-1)j!}\\
			&\leq C_p\left (\frac{\pi}{2n}\right )^{1-p}\sum_{j=0}^{\infty}
			\frac{1}{(p+j-1)j}\\
			&\leq \mathcal{C}_p \left (\frac{\pi}{2n}\right )^{1-p},
		\end{align*}
		where $\mathcal{C}_p>0$ is a constant since the series is convergent. Therefore, we obtain 
		\[
		\frac{\pi^3}{4n^2}\left (\int\limits_{|\theta-\theta_k^{(n)}|>\frac{\pi}{n}}\frac{|\theta-\theta_k^{(n)}|^p}{|\theta-\theta_k^{(n)}-\frac{\pi}{2n}|^{2p}}\ d\theta\right )^\frac{1}{p}=\mathcal{O}\left(\frac{1}{n^{1+\frac{1}{p}}}\right )
		\]
		For $|\theta-\theta_k^{(n)}|\leq \frac{\pi}{n}$, we have 
		\[
		\left (\int\limits_{|\theta-\theta_k^{(n)}|\leq \frac{\pi}{n}}|\theta-\theta_k^{(n)}|^p||S_{k,n}(\theta)|^p\ d\theta\right )^\frac{1}{p}=\mathcal{O}\left (\frac{1}{n^{1+\frac{1}{p}}}\right )
		\]
		Now we prove $(ii)$. To do this, we split $[0,\pi]$ into three intervals, that is, $|\theta-\theta_k^{(n)}|\leq \frac{\pi}{n}$, $\frac{\pi}{n}<|\theta-\theta_k^{(n)}|\leq  \frac{\pi}{n^{\frac{1}{3}}}$ and $|\theta-\theta_k^{(n)}|>\frac{\pi}{n^{\frac{1}{3}}}$.
		
		\textbf{Case 1: $|\theta-\theta_k^{(n)}|\leq \frac{\pi}{n}$} In this case, we have 
		\[
		\left (\int\limits_{|\theta-\theta_k^{(n)}|\leq \frac{\pi}{n}} |S_{k,n}(\theta)|^p \, d\theta\right )^\frac{1}{p}\leq 2c_1\frac{\pi}{n^\frac{1}{p}}=\mathcal{O}\left (\frac{1}{n^\frac{1}{p}}\right ).
		\]
		\textbf{Case 2: $|\theta-\theta_k^{(n)}|>\frac{\pi}{n^{\frac{1}{3}}}$}, we have 
		\begin{align*}
			|S_{k,n}(\theta)|
			&\le \frac{\pi^3}{4n^2}\frac{1}{\left(\theta-\theta_k^{(n)}-\frac{\pi}{2n}\right)^2} \\
			&\le \frac{\pi^3}{4n^2}\frac{1}{\left(|\theta-\theta_k^{(n)}|-\frac{\pi}{2n}\right)^2} \\
			&\le \frac{\pi^3}{4n^2}\frac{1}{\left(\frac{\pi}{n^{1/3}}-\frac{\pi}{2n}\right)^2} \\
			&= \frac{\pi^3}{4n^2}\cdot \frac{1}{\frac{\pi^2}{n^{2/3}}\left(1-\frac{1}{2n^{1/3}}\right)^2} \leq \frac{\pi}{2n^{4/3}}.
		\end{align*}
		Thus, we get 
		\[
		\left (\int\limits_{|\theta-\theta_k^{(n)}|>\frac{\pi}{n^\frac{1}{3}}} |S_{k,n}(\theta)|^p \, d\theta\right )^\frac{1}{p}=\mathcal{O}\left (\frac{1}{n^{\frac{4}{3}}}\right ).
		\]
		\textbf{Case 3: $\frac{\pi}{n}<|\theta-\theta_k^{(n)}|\leq  \frac{\pi}{n^{\frac{1}{3}}}$}
		Let
		\[
		I=\int\limits_{\frac{\pi}{n}<|\theta-\theta_k^{(n)}|\le \frac{\pi}{n^{1/3}}}
		\frac{d\theta}{\left|\theta-\theta_k^{(n)}-\frac{\pi}{2n}\right|^{2p}}.
		\]
		The given interval does not contain any singularities of the integrand. Therefore, on integrating, we have
		\begin{align*}
			I&=\int\limits_{\theta_k^{(n)}+\frac{\pi}{n}}^{\theta_k^{(n)}+\frac{\pi}{n^{1/3}}}
			\frac{d\theta}{\left(\theta-\theta_k^{(n)}-\frac{\pi}{2n}\right)^{2p}}
			+
			\int\limits_{\theta_k^{(n)}-\frac{\pi}{n^{1/3}}}^{\theta_k^{(n)}-\frac{\pi}{n}}
			\frac{d\theta}{\left(\theta_k^{(n)}+\frac{\pi}{2n}-\theta\right)^{2p}}\\
			&=\frac{1}{2p-1}
			\left[
			\left(\frac{\pi}{2n}\right)^{1-2p}
			-
			\left(\frac{\pi}{n^{1/3}}-\frac{\pi}{2n}\right)^{1-2p}
			\right]\\
			&\quad + \frac{1}{2p-1}
			\left[
			\left(\frac{3\pi}{2n}\right)^{1-2p}
			-
			\left(\frac{\pi}{n^{1/3}}+\frac{\pi}{2n}\right)^{1-2p}
			\right]\\
			&=\frac{1}{2p-1}
			\Bigg[
			\left(\frac{\pi}{2n}\right)^{1-2p}
			+
			\left(\frac{3\pi}{2n}\right)^{1-2p}
			-
			\left(\frac{\pi}{n^{1/3}}-\frac{\pi}{2n}\right)^{1-2p}
			-
			\left(\frac{\pi}{n^{1/3}}+\frac{\pi}{2n}\right)^{1-2p}
			\Bigg]\\
			&=\mathcal{O}\big(n^{2p-1}\big).
		\end{align*}
		Thus, we have  $\frac{\pi^3}{4n^2}I^\frac{1}{p}=\mathcal{O}\left (\frac{1} {n^\frac{1}{p}}\right )$. Hence the proposition.
	\end{proof}
	Now, we prove our main result.
	\begin{lemma}\label{l1bdd}
		The sequence of operators $\{\mathcal{GK}_n:L^1[0,\pi]\to L^1[0,\pi]\}_{n\in\mathbb{N}}$ is bounded uniformly in the space $L^1[0,\pi]$.
	\end{lemma}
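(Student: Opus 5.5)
The proof is a direct estimate combining Minkowski's inequality with part (ii) of Proposition \ref{propos} for $p=1$. Write $\mathcal{GK}_n(f)(\theta)=\frac{2n}{\pi}\sum_{k=1}^n \Big(\int_{\theta_k^{(n)}}^{\theta_k^{(n)}+\frac{\pi}{2n}} f(t)\,dt\Big) S_{k,n}(\theta)$. Integrating the absolute value over $\theta\in[0,\pi]$ and using the triangle inequality, we get
\[
\|\mathcal{GK}_n(f)\|_1\leq \frac{2n}{\pi}\sum_{k=1}^n \Big(\int_{\theta_k^{(n)}}^{\theta_k^{(n)}+\frac{\pi}{2n}} |f(t)|\,dt\Big)\int_0^\pi |S_{k,n}(\theta)|\,d\theta .
\]

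The key input is Proposition \ref{propos}(ii) with $p=1$, which gives $\int_0^\pi |S_{k,n}(\theta)|\,d\theta\leq \frac{C}{n}$ with $C$ independent of $k$ and $n$. I will check in the proof of the proposition that the implied constant is uniform in $k$. It should be, since each of the three cases uses only the pointwise bound $|S_{k,n}(\theta)|\leq \frac{\pi^3}{4n^2}(\theta-\theta_k^{(n)}-\frac{\pi}{2n})^{-2}$ and $|S_{k,n}|\leq c_1$, neither of which depends on $k$. Substituting this bound, the factor $\frac{2n}{\pi}\cdot\frac{C}{n}=\frac{2C}{\pi}$ is independent of $n$.

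It remains to control the sum of the local integrals. The intervals $[\theta_k^{(n)},\theta_k^{(n)}+\frac{\pi}{2n}]=[\frac{(k-1)\pi}{n},\frac{(k-1)\pi}{n}+\frac{\pi}{2n}]$ (since $\theta_k^{(n)}=\frac{(2k-1)\pi}{2n}$) are pairwise disjoint and contained in $[0,\pi]$. Hence
\[
\sum_{k=1}^n\int_{\theta_k^{(n)}}^{\theta_k^{(n)}+\frac{\pi}{2n}}|f(t)|\,dt\leq \|f\|_1,
\]
and therefore $\|\mathcal{GK}_n(f)\|_1\leq \frac{2C}{\pi}\|f\|_1$ for all $n$ and all $f\in L^1[0,\pi]$.

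The main obstacle is the uniform-in-$k$ $L^1$ bound on $S_{k,n}$, specifically its middle region $\frac{\pi}{n}<|\theta-\theta_k^{(n)}|\leq \frac{\pi}{n^{1/3}}$. There the decay $n^{-2}|\theta-\theta_k^{(n)}-\frac{\pi}{2n}|^{-2}$ integrates to $O(n^{-2}\cdot n)=O(1/n)$. If that step needed more care, I would bypass the proposition and integrate the Grünwald bound directly: over $|\theta-\theta_k^{(n)}|>\frac{\pi}{n}$, $\int \frac{d\theta}{(\theta-\theta_k^{(n)}-\frac{\pi}{2n})^2}\leq \frac{4n}{\pi}$, so that region contributes $O(1/n)$. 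On $|\theta-\theta_k^{(n)}|\leq\frac{\pi}{n}$, the bound $|S_{k,n}|\leq c_1$ from Lemma \ref{lem} gives $2\pi c_1/n$.
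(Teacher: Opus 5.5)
Your proof is correct and follows the paper's argument: you apply the triangle inequality, use Proposition \ref{propos}(ii) with $p=1$ to get $\int_0^\pi|S_{k,n}(\theta)|\,d\theta\le M/n$, and sum the local integrals of $|f|$; you also make explicit the uniformity in $k$ and the disjointness of the averaging intervals, which the paper leaves implicit. One small slip: since $\theta_k^{(n)}=\frac{(2k-1)\pi}{2n}$, the intervals are $[\frac{(2k-1)\pi}{2n},\frac{k\pi}{n}]$, not $[\frac{(k-1)\pi}{n},\frac{(k-1)\pi}{n}+\frac{\pi}{2n}]$, but they are still pairwise disjoint and contained in $[0,\pi]$, so the bound $\sum_k\int|f|\le\|f\|_1$ is unaffected.
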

	\begin{proof}
		By Proposition \ref{propos}, we have $\int\limits_0^\pi|S_{k,n}(\theta)|\ d\theta=\mathcal{O}(\frac{1}{n})$.
		Hence, we have 
		\begin{align*}
			\int\limits_{0}^{\pi}	|\mathcal{GK}_n(f)(\theta)|\ d\theta &\leq \frac{2n}{\pi}\sum\limits_{k=1}^n \int\limits_{\theta_k^{(n)}}^{\theta_k^{(n)}+\frac{\pi}{2n}}|f(t)|\ dt\int\limits_{0}^{\pi} |S_{k,n}(\theta)|\ d\theta\\
			&\leq \frac{2Mn}{n\pi}\sum\limits_{k=1}^n \int\limits_{\theta_k^{(n)}}^{\theta_k^{(n)}+\frac{\pi}{2n}}|f(t)|\ dt\\
			&=\frac{M}{\pi}\sum\limits_{k=1}^n \int\limits_{\theta_k^{(n)}}^{\theta_k^{(n)}+\frac{\pi}{2n}}|f(t)|\ dt,
		\end{align*}
		where $\int\limits_{0}^{\pi} |S_{k,n}(\theta)|\ d\theta\leq \frac{M}{n}$ some constant $M>0$. Thus, we conclude that $\|\mathcal{GK}_n(f)\|_1\leq C\|f\|_1$, for some constant $C>0$. We also note that $C$ does not depend on $n$ or $f$.
	\end{proof}
	\subsection{Boundedness in the $L^p$-space}
	In the preceding section, we established the uniform boundedness of the sequence of operators $\{\mathcal{GK}_n\}_{n\in\mathbb{N}}$ in the spaces $C[0,\pi]$ (or $L^\infty[0,\pi]$) and $L^1[0,\pi]$. In this section, we prove that this uniform boundedness can be established in the space $L^p[0,\pi]$, for $1<p<+\infty$. To prove this, we recall a fundamental interpolation theorem in harmonic analysis, namely the Riesz--Thorin interpolation theorem (see Theorem 1.19, \cite{fourier}), stated below.
	\begin{theorem}{Riesz--Thorin Interpolation}\label{RT}
		Let $1\leq p_0,p_1,q_0,q_1\leq +\infty$, and for $0<\theta<1$, define $p$ and $q$ by 
		\begin{equation}\label{con}
			\frac{1}{p}=\frac{1-\theta}{p_0}+\frac{\theta}{p_1},\quad 	\frac{1}{q}=\frac{1-\theta}{q_0}+\frac{\theta}{q_1}.
		\end{equation}
		If $T$ is a linear operator from $L^{p_0}+L^{p_1}
		$ to $L^{q_0}+L^{q_1}$ such that 
		\[
		\|Tf\|_{q_0}\leq M_0\|f\|_{p_0}, \ for\ f\in L^{p_0}
		\]
		and 
		\[
		\|Tf\|_{q_1}\leq M_1\|f\|_{p_1}, \ for\ f\in L^{p_1}.
		\]
		Then 
		\[
		\|Tf\|_{q}\leq M_0^{1-\theta}M_1^\theta\|f\|_{p}, \ for\ f\in L^{p}
		\]
	\end{theorem}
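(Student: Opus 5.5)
This is the classical Riesz--Thorin theorem, and I would prove it by complex interpolation: Hadamard's three lines lemma combined with duality. Fix $\theta\in(0,1)$ and $p,q$ as in \eqref{con}. It suffices to show
\[
\Bigl|\int (Tf)\,g\Bigr|\leq M_0^{1-\theta}M_1^{\theta}
\]
for all simple $f$ with $\|f\|_p=1$ and all simple $g$ with $\|g\|_{q'}=1$, where $q'$ is the conjugate exponent of $q$. The bound $\|Tf\|_q\leq M_0^{1-\theta}M_1^\theta\|f\|_p$ then follows for simple $f$ by the duality characterization of the $L^q$ norm (simple functions are norming in $L^q$). It extends to all of $L^p$ by density of simple functions when $p<\infty$. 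When $p=\infty$ we have $p_0=p_1=\infty$, and I would handle it by an approximation/monotone convergence argument. Throughout I take scalars to be complex; for real-valued spaces one either complexifies $T$ or accepts a harmless constant.

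\textbf{Construction of the analytic family.} For $z$ in the closed strip $0\leq \operatorname{Re} z\leq 1$, define $\alpha(z)=\frac{1-z}{p_0}+\frac{z}{p_1}$ and $\beta(z)=\frac{1-z}{q_0'}+\frac{z}{q_1'}$, so that $\alpha(\theta)=1/p$ and $\beta(\theta)=1/q'$. Writing $f=|f|e^{i\arg f}$ and $g=|g|e^{i\arg g}$, set
\[
f_z=|f|^{p\alpha(z)}e^{i\arg f},\qquad g_z=|g|^{q'\beta(z)}e^{i\arg g}
\]
(with the obvious modification if $p=\infty$ or $q'=\infty$). Put $F(z)=\int T(f_z)\,g_z$. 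Since $f,g$ are finite linear combinations of indicator functions of sets of finite measure, $F$ is a finite sum of terms of the form $c\,e^{az+b}\int T(\chi_A)\chi_B$. Hence $F$ is entire and bounded on the strip, and $F(\theta)=\int (Tf)g$.

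\textbf{Boundary estimates and conclusion.} On $\operatorname{Re} z=0$ we have $|f_{it}|=|f|^{p/p_0}$ and $|g_{it}|=|g|^{q'/q_0'}$. Therefore $\|f_{it}\|_{p_0}=\|f\|_p^{p/p_0}=1$ and $\|g_{it}\|_{q_0'}=1$, and Hölder together with the first hypothesis gives $|F(it)|\leq M_0$. Similarly $|F(1+it)|\leq M_1$ using the second hypothesis. The three lines lemma (applied to $F(z)M_0^{z-1}M_1^{-z}$, which is bounded on the strip and at most $1$ on both boundary lines, followed by Phragmén--Lindelöf) yields $|F(\theta)|\leq M_0^{1-\theta}M_1^{\theta}$, as required.

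\textbf{Main obstacle.} The analytic core is routine. The delicate parts are the bookkeeping of the endpoint cases and the final extension step. In the endpoint cases, $p_j=\infty$ or $q_j'=\infty$ make the exponents degenerate; I would treat these by keeping $f_z=f$ (respectively $g_z=g$) in that slot. In the extension step, the operator is only known on $L^{p_0}+L^{p_1}$. There I would check that a simple-function approximation $f_k\to f$ in $L^p$ also converges in $L^{p_0}+L^{p_1}$. Splitting $f=f\chi_{|f|>1}+f\chi_{|f|\leq1}$ shows this, so that $Tf_k\to Tf$ in measure (after passing to a subsequence, a.e.). Fatou's lemma then gives the bound for $Tf$.
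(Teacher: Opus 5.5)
The paper does not prove this statement. It quotes it without proof from the Fourier analysis text it cites (Theorem 1.19 there), and the standard proof in that source is the Thorin argument you outline, so your route is the intended one. Your core argument is correct: the analytic families $f_z,g_z$, Hölder on $\operatorname{Re}z=0$ and $\operatorname{Re}z=1$, and the three lines lemma for $F(z)M_0^{z-1}M_1^{-z}$ (replace $M_j$ by $M_j+\varepsilon$ if one of them vanishes). Three pieces of your bookkeeping need repair, though. The first two fail exactly in the case $(p_0,q_0)=(1,1)$, $(p_1,q_1)=(\infty,\infty)$ that the paper actually uses.

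\textbf{Freezing rule too broad.} Your last paragraph says that $p_j=\infty$ or $q_j'=\infty$ makes the exponents degenerate, and that you would then freeze $f_z=f$ (resp.\ $g_z=g$). Only $p=\infty$ (forcing $p_0=p_1=\infty$) and $q'=\infty$ (forcing $q_0=q_1=1$) are degenerate, as you correctly said earlier. When just one endpoint is infinite, e.g.\ $(p_0,p_1)=(1,\infty)$, the formula $f_z=|f|^{p(1-z)}e^{i\arg f}$ works unchanged: $\|f_{it}\|_1=\|f\|_p^p=1$ and $\|f_{1+it}\|_\infty\le 1$. Freezing $f_z=f$ there would require $\|f\|_1\le 1$ to get $|F(it)|\le M_0$, which does not follow from $\|f\|_p=1$. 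The same problem arises with $g_z$ when $q_0'=\infty$ but $q_1'<\infty$.

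\textbf{Splitting at height $1$.} In the extension step, splitting at height $1$ does not show $f_k\to f$ in $L^{p_0}+L^{p_1}$ when $p_1=\infty$. For $h=f-f_k$, the piece $h\chi_{\{|h|\le 1\}}$ need not be small in $L^\infty$; take $h=\tfrac12\chi_{[0,\varepsilon]}$, which is small in $L^p$ but has $\|h\chi_{\{|h|\le1\}}\|_\infty=\tfrac12$. Split instead at a height $\delta$ adapted to $h$. One has $\|h\chi_{\{|h|\le\delta\}}\|_\infty\le\delta$ and $\|h\chi_{\{|h|>\delta\}}\|_{p_0}^{p_0}\le \delta^{p_0-p}\|h\|_p^p$ for $p_0<p$. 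Choosing $\delta=\|h\|_p^{1/2}$ makes both pieces tend to $0$. On the paper's finite interval you can simply use $\|h\|_1\le \pi^{1-1/p}\|h\|_p$.

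\textbf{The case $p=\infty$.} This case needs no approximation. On infinite measure spaces, simple functions supported on sets of finite measure are not dense in $L^\infty$, so your approximation/monotone convergence plan would be delicate there. Since $p_0=p_1=\infty$ in this case, log-convexity of $L^q$ norms (Hölder) gives directly $\|Tf\|_q\le \|Tf\|_{q_0}^{1-\theta}\|Tf\|_{q_1}^{\theta}\le M_0^{1-\theta}M_1^{\theta}\|f\|_\infty$.

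With these fixes your proof is complete.
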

	Now, we have the following result. 
	\begin{lemma}\label{lpbdd}
		The sequence of operators $\{\mathcal{GK}_n:L^p[0,\pi]\to L^p[0,\pi]\}_{n\in\mathbb{N}}$ is bounded uniformly in the space $L^p[0,\pi]$, for $1<p<+\infty$.
	\end{lemma}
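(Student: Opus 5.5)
The plan is to apply the Riesz--Thorin interpolation theorem (Theorem \ref{RT}) to each fixed operator $\mathcal{GK}_n$, using the two endpoint estimates already in hand. These endpoints are $L^1$ (Lemma \ref{l1bdd}) and $L^\infty$. Since $\mathcal{GK}_n$ is linear and $[0,\pi]$ has finite measure, $L^p[0,\pi]\subset L^1[0,\pi]=L^1+L^\infty$. So $\mathcal{GK}_n$ is defined as a linear map from $L^1+L^\infty$ into $L^1+L^\infty$, and the hypotheses of Theorem \ref{RT} make sense with $p_0=q_0=1$ and $p_1=q_1=\infty$.

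The first step is the $L^\infty$ endpoint. Lemma \ref{bdd} is stated only on $C[0,\pi]$, so I re-run its proof for $f\in L^\infty[0,\pi]$. Pointwise we have $\frac{2n}{\pi}\bigl|\int_{\theta_k^{(n)}}^{\theta_k^{(n)}+\pi/(2n)} f(t)\,dt\bigr|\le \|f\|_\infty$. Combined with the remark $\sum_{k}|S_{k,n}(\theta)|\le c_1$ (Lemma \ref{lem}), this gives $\|\mathcal{GK}_n f\|_\infty\le c_1\|f\|_\infty$, with $M_1=c_1$ independent of $n$. The second step is the $L^1$ endpoint. Lemma \ref{l1bdd}, through Proposition \ref{propos}(ii) with $p=1$, gives $\|\mathcal{GK}_n f\|_1\le C\|f\|_1$, with $M_0=C$ independent of $n$.

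Next I choose the interpolation parameter $\vartheta=1-\frac1p\in(0,1)$ for the given $1<p<\infty$. (I write $\vartheta$ to avoid a clash with the angle variable $\theta$.) Then $\frac1p=\frac{1-\vartheta}{1}+\frac{\vartheta}{\infty}$, and the same relation holds for $q=p$. Theorem \ref{RT} then yields
\[
\|\mathcal{GK}_n f\|_p\le C^{1/p}c_1^{1-1/p}\|f\|_p,\qquad f\in L^p[0,\pi].
\]
The constant depends only on $p$, not on $n$ or $f$, which is the claimed uniform boundedness.

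The only point needing care, and the expected main obstacle, is the uniformity of the endpoint constants. These are the absolute constant $c_1$ of Lemma \ref{lem} and the constant $M$ in $\int_0^\pi|S_{k,n}|\,d\theta\le M/n$ from Proposition \ref{propos}(ii). $M$ must be uniform in both $k$ and $n$, and I would check that the case estimates in that proof indeed give such a bound. Riesz--Thorin is applied to each $n$ separately, so no further uniformity argument is needed. Alternatively, one could avoid interpolation by using Hölder's inequality directly on the kernel $\frac{2n}{\pi}\sum_k \chi_{[\theta_k^{(n)},\theta_k^{(n)}+\pi/(2n)]}(t)S_{k,n}(\theta)$ via Schur's test. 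Its row sums are bounded by $c_1$ and its column integrals by $2M/\pi$, which gives the same conclusion.
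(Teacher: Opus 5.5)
Your proposal is correct and is essentially the paper's own proof: it applies Riesz--Thorin to each $\mathcal{GK}_n$ with $p_0=q_0=1$ and $p_1=q_1=\infty$, using the $n$-independent constants from Lemma \ref{bdd} and Lemma \ref{l1bdd}. You are slightly more careful than the paper, which silently extends the sup-norm bound from $C[0,\pi]$ to $L^\infty[0,\pi]$ and swaps the exponents in the interpolated constant; your $C^{1/p}c_1^{1-1/p}$ is the correct constant, and the swap does not affect uniformity.
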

	\begin{proof}
		From Lemma \ref{bdd} and \ref{l1bdd}, we have  
		\[
		\|\mathcal{GK}_n(f)\|_\infty\leq c_1\|f\|_\infty, \ for \ f\in L^\infty[0,\pi] \ for\ all\ n\in\mathbb{N}
		\]
		and 
		\[
		\|\mathcal{GK}_n(f)\|_1\leq C\|f\|_1, \ for \ f\in L^1[0,\pi]\ for\ all\ n\in\mathbb{N}.
		\]
		
		Therefore, by applying Theorem \ref{RT}, we have 
		\[
		\|\mathcal{GK}_n(f)\|_p\leq c_1^{1-\theta}C^\theta\|f\|_p, \ for \ f\in L^p[0,\pi]\ for\ all\ n\in\mathbb{N}.
		\]
		for $\theta\in (0,1)$, $p_0=q_0=1$, $p_1=q_1=\infty$ and $p$, $q$ satisfying \ref{con}. In other words, we see that for $1<p<+\infty$, 
		\[
		\|\mathcal{GK}_n(f)\|_p\leq C_p\|f\|_p, \ for \ f\in L^p[0,\pi]\ for\ all\ n\in\mathbb{N},
		\]
		where $C_p$ does not depend on $n$ or $f$.
	\end{proof}
	\subsection{Convergence in the space $L^p[0,\pi]$}
	In this section, we establish the convergence of the sequence $\{\mathcal{GK}_n\}_{n\in\mathbb{N}}$ in the space $L^p[0,\pi]$ for $1\leq p<+\infty$. We have the following main result.
	\begin{theorem}\label{main2}
		Let $1\leq p<+\infty$. Then, we have 
		\[
		\lim\limits_{n\to\infty}\|\mathcal{GK}_n(f)-f\|_p=0,
		\]
		for all $f\in L^p[0,\pi]$.
	\end{theorem}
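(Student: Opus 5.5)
The plan is to use the standard density argument (Banach–Steinhaus type), combining the uniform boundedness already established with convergence on a dense subclass. By Lemma \ref{l1bdd} (for $p=1$) and Lemma \ref{lpbdd} (for $1<p<+\infty$) there is a constant $C_p$, independent of $n$, with $\|\mathcal{GK}_n(f)\|_p\le C_p\|f\|_p$ for all $f\in L^p[0,\pi]$. Since $C[0,\pi]$ is dense in $L^p[0,\pi]$ for $1\le p<+\infty$, it is enough to prove $\|\mathcal{GK}_n(g)-g\|_p\to 0$ for every $g\in C[0,\pi]$.

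For continuous $g$ this follows from Theorem \ref{main}. Because $[0,\pi]$ has finite measure,
\[
\|\mathcal{GK}_n(g)-g\|_p\le \pi^{1/p}\|\mathcal{GK}_n(g)-g\|_\infty\longrightarrow 0 .
\]
Here we use that $\mathcal{GK}_n(g)$ is a trigonometric-type combination of the continuous functions $S_{k,n}$, so the sup-norm estimate of Theorem \ref{main} is legitimate.

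Now fix $f\in L^p[0,\pi]$ and $\varepsilon>0$, and choose $g\in C[0,\pi]$ with $\|f-g\|_p<\varepsilon$. Linearity of $\mathcal{GK}_n$ gives
\[
\|\mathcal{GK}_n(f)-f\|_p\le \|\mathcal{GK}_n(f-g)\|_p+\|\mathcal{GK}_n(g)-g\|_p+\|g-f\|_p\le (C_p+1)\varepsilon+\|\mathcal{GK}_n(g)-g\|_p .
\]
Taking $\limsup_{n\to\infty}$ gives $\limsup_n\|\mathcal{GK}_n(f)-f\|_p\le (C_p+1)\varepsilon$, and since $\varepsilon$ is arbitrary the theorem follows.

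The step I expect to be the real obstacle is the uniform $L^p$ bound itself, i.e.\ the inputs Lemma \ref{l1bdd} and Lemma \ref{lpbdd}.
\begin{itemize}
\item The $L^1$ bound rests on $\int_0^\pi|S_{k,n}|\,d\theta=\mathcal{O}(1/n)$ uniformly in $k$, which is Proposition \ref{propos}(ii).
\item The $L^p$ case uses Riesz--Thorin between $L^1$ and $L^\infty$. This requires the $L^\infty$ bound for general $L^\infty$ functions, not just continuous ones. It holds by the same computation as Lemma \ref{bdd}, since only $|f|\le\|f\|_\infty$ a.e.\ on the averaging intervals was used.
\end{itemize}
Given those results as stated earlier in the paper, the remaining argument is the routine $\varepsilon/3$-scheme above.
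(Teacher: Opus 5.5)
Your proposal is correct and is essentially the paper's own proof. It gets convergence for continuous $g$ from Theorem \ref{main} via $\|\cdot\|_p\le\pi^{1/p}\|\cdot\|_\infty$, then combines density of $C[0,\pi]$ in $L^p[0,\pi]$ with the uniform bounds of Lemmas \ref{l1bdd} and \ref{lpbdd} in the usual $\varepsilon/3$ argument. Your side remark is also correct and fills a point the paper passes over: the Riesz--Thorin step needs the $L^\infty$ bound of Lemma \ref{bdd} for all of $L^\infty[0,\pi]$, not just $C[0,\pi]$, and the same computation gives it.
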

	\begin{proof}
		Let $g\in C[0,\pi]$.
		We have,
		\begin{equation}\label{eq}
			\|\mathcal{GK}_n(g)-g\|_p=(\int\limits_0^\pi|\mathcal{GK}_n(g)(\theta)-g(\theta)|^p\ d\theta)^\frac{1}{p}\leq \pi^\frac{1}{p}\|\mathcal{GK}_n(g)-g\|_\infty.
		\end{equation}
		By Theorem \ref{main}, $\lim\limits_{n\to\infty}\|\mathcal{GK}_n(g)-g\|_\infty=0$. Suppose $\epsilon>0$. We know that $C[0,\pi]$ is dense in $L^p[0,\pi]$. Thus, if $f\in L^p[0,\pi]$, then there exist a $g\in C[0,\pi]$ such that $\|f-g\|_p<\epsilon$. Let $f\in L^p[0,\pi]$, then 
		\begin{align*}
			\|\mathcal{GK}_n(f)-f\|_p&\leq \|\mathcal{GK}_n(f-g)\|_p+\|\mathcal{GK}_n(g)-g\|_p+
			\|g-f\|_p\\
			&\leq (C_p+1)\|f-g\|_p+ \|\mathcal{GK}_n(g)-g\|_p,
		\end{align*}
		since, $\|\mathcal{GK}_n(f)\|_p\leq C_p\|f\|_p$ for all $f\in L^p[0,\pi]$ by Lemma \ref{l1bdd} and Lemma \ref{lpbdd}. Hence by \ref{eq}, for sufficiently large $n$ and a given $\epsilon>0$,
		\[
		\|\mathcal{GK}_n(f)-f\|_p\leq (C_p+1)\epsilon+ \epsilon=(C_p+2)\epsilon.
		\]
		for some $C_p>0$ which does not depend on $n$, $f$ or $g$. Hence the proof.
	\end{proof}
	\begin{remark}
		The convergence established in Theorem \ref{main2} may alternatively be deduced by applying a Korovkin-type theorem from \cite{vinaya}. This result will also be employed to establish convergence results in certain Banach function spaces in Section~\ref{section5} 
	\end{remark}
	\subsection{Rate of Convergence}		
	In this section, our aim is to obtain a quantitative estimate for the convergence of the Gr\"unwald-Kantorovich operators on the space $L^p[0,\pi]$ for $1\leq p<+\infty$. To this end, we employ a suitable $K$-functional. We consider the following version of Petree's K-functional, in which the derivative is measured in the uniform norm in contrast to the K-functional where derivative is measured in the $L^p$ norm.
	
	For $1\leq p<+\infty$, $f \in L^p[0.\pi]$ and $\delta > 0$, we consider
	\[
	K_p(f,\delta)
	=
	\inf_{g\in C^1[0,\pi]}
	\left\{
	\|f-g\|_{p}
	+
	\delta \|g'\|_{\infty}
	\right\}.
	\]
	Here $C^1[0,\pi]$ denotes the space of all continuously differentiable functions on $[0,\pi]$.
	Applications of similar $K$-functionals for certain neural network-type operators can be found in~\cite{costarelli}. 
	
	We now state the following estimation theorem, which yields the rate of convergence in terms of the above-mentioned K-functional.
	\begin{theorem}\label{main3}
		Let $1\leq p<+\infty$ and $f\in L^p[0,\pi]$, then
	\[ 
	\|\mathcal{D}_n(f)-f\|_p\leq R_pK_p(f,\tilde R_pm_{n}),
	\]
	for some positive constants $R_p$ and $\tilde R_p$ and
		 \[
		m_{n}=\begin{cases}
			\frac{1+\log n}{n}, & p = 1, \\[8pt]
			\frac{1}{n}+\frac{1}{n^\frac{1}{p}}, & 1<p<+\infty.
		\end{cases}
		\]
	\end{theorem}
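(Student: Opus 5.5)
We will prove the estimate by the standard splitting argument for Peetre-type $K$-functionals. It needs only three properties of the operators $\mathcal{D}_n$ in the statement: (a) linearity together with uniform boundedness on $L^p[0,\pi]$, namely $\|\mathcal{D}_n(h)\|_p\le C_p\|h\|_p$ with $C_p$ independent of $n$; (b) reproduction of constants, $\mathcal{D}_n(1)=1$; and (c) a Kantorovich-type representation
\[
\mathcal{D}_n(g)(\theta)=\frac{2n}{\pi}\sum_{k=1}^n\int_{\theta_k^{(n)}}^{\theta_k^{(n)}+\frac{\pi}{2n}}g(t)\,dt\,S_{k,n}(\theta),
\]
whose kernels $S_{k,n}$ satisfy the moment bounds of Proposition~\ref{propos}. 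The first step of the proof is to check (a)--(c) for $\mathcal{D}_n$. These are exactly the properties established for the Gr\"unwald--Kantorovich construction: (a) is Lemma~\ref{l1bdd} for $p=1$ and Lemma~\ref{lpbdd} for $1<p<\infty$, (b) is the remark following the definition (\ref{gk_n}), and (c) is the defining formula. Everything after this step is a consequence of (a)--(c) alone.

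Fix $g\in C^1[0,\pi]$. By (a) and the triangle inequality,
\[
\|\mathcal{D}_n(f)-f\|_p\le\|\mathcal{D}_n(f-g)\|_p+\|f-g\|_p+\|\mathcal{D}_n(g)-g\|_p\le (C_p+1)\|f-g\|_p+\|\mathcal{D}_n(g)-g\|_p .
\]
The core of the proof is the smooth-function estimate
\[
\|\mathcal{D}_n(g)-g\|_p\le A_p\, m_n\,\|g'\|_\infty .
\]
To prove it, use (b) to write, for each $\theta$,
\[
\mathcal{D}_n(g)(\theta)-g(\theta)=\frac{2n}{\pi}\sum_k\int_{\theta_k^{(n)}}^{\theta_k^{(n)}+\frac{\pi}{2n}}\bigl(g(t)-g(\theta)\bigr)\,dt\,S_{k,n}(\theta).
\]
By the mean value theorem, for $t$ in the $k$-th interval,
\[
|g(t)-g(\theta)|\le\|g'\|_\infty\bigl(\tfrac{\pi}{2n}+|\theta-\theta_k^{(n)}|\bigr).
\]
This gives the pointwise bound
\[
|\mathcal{D}_n(g)(\theta)-g(\theta)|\le\|g'\|_\infty\Bigl(\frac{\pi}{2n}\sum_k|S_{k,n}(\theta)|+\sum_k|\theta-\theta_k^{(n)}||S_{k,n}(\theta)|\Bigr).
\]

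Now take $L^p$ norms of the two terms separately.

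1. For the first term, Lemma~\ref{lem} gives $\sum_k|S_{k,n}|\le c_1$, so its $L^p$ norm is at most $\pi^{1/p}c_1\frac{\pi}{2n}=\mathcal{O}(1/n)$.
2. For the second term, apply Minkowski's inequality over the $n$ summands and then Proposition~\ref{propos}(i). For $1<p<\infty$ this gives $n\cdot\mathcal{O}(n^{-1-1/p})=\mathcal{O}(n^{-1/p})$. For $p=1$ it gives $n\cdot\mathcal{O}((1+\log n)/n^2)=\mathcal{O}((1+\log n)/n)$.

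Adding the two contributions yields $\mathcal{O}(m_n)$ in each case. For $p=1$ the $1/n$ term is absorbed into $(1+\log n)/n$. For $1<p<\infty$ both terms $\frac1n$ and $\frac{1}{n^{1/p}}$ appear, which matches the form of $m_n$.

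Combining the two bounds,
\[
\|\mathcal{D}_n(f)-f\|_p\le (C_p+1)\|f-g\|_p+A_p m_n\|g'\|_\infty\le R_p\bigl(\|f-g\|_p+\tilde R_p m_n\|g'\|_\infty\bigr),
\]
with $R_p=C_p+1$ and $\tilde R_p=A_p/(C_p+1)$. Taking the infimum over $g\in C^1[0,\pi]$ gives $R_pK_p(f,\tilde R_pm_n)$, as claimed.

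The main obstacle is the second term of the smooth-function estimate. Summing $n$ kernel moments by Minkowski's inequality loses a factor of $n$, so the exact decay rates in Proposition~\ref{propos}(i) are what keep the total at order $m_n$. A second point needing care is step one: we must make sure that the operator $\mathcal{D}_n$ in the statement really has properties (a)--(c), with the constant $C_p$ uniform in $n$ for every $1\le p<\infty$, since $R_p$ is built directly from it.
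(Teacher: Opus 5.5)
Your proposal is correct and follows essentially the same route as the paper. Both arguments split off $(C_p+1)\|f-g\|_p$ using the uniform $L^p$ bound. Both then bound $|\mathcal{GK}_n(g)-g|$ pointwise for $g\in C^1[0,\pi]$ using the mean value theorem and $|t-\theta|\le \frac{\pi}{2n}+|\theta-\theta_k^{(n)}|$, and control the second sum by Minkowski's inequality and Proposition~\ref{propos}(i) before taking the infimum over $g$. Your reading of $\mathcal{D}_n$ as $\mathcal{GK}_n$ is right. You also track the $\pi^{1/p}$ factor and the constant $R_p=C_p+1$ more carefully than the paper's write-up does.
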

	\begin{proof}
		Let $f\in L^p[0,\pi]$, $g\in C^1[0,\pi]$ and $\theta\in [0,\pi]$. Then, 
		\[
		\mathcal{GK}_n(f)(\theta)-f(\theta)=\mathcal{GK}_n(f-g)(\theta)+\mathcal{GK}_n(g)(\theta)-g(\theta)+g(\theta)-f(\theta).
		\]
		Therefore, 
		\begin{align*}
			\|\mathcal{GK}_n(f)-f\|_p&\leq \|\mathcal{GK}_n(f-g)\|_p+\|\mathcal{GK}_n(g)-g\|_p+\|g-f\|_p.\\
			&\leq
			C_p\|f-g\|_p+\|\mathcal{GK}_n(g)-g\|_p+\|g-f\|_p\\ 
			&\leq (C_p+1)\|f-g\|_p+\|\mathcal{GK}_n(g)-g\|_p\\
			&=(C_p+1)\|f-g\|_p+\|\mathcal{GK}_n(g)-g\|_p, 
		\end{align*}
		since $\|\mathcal{GK}_n(f)\|_p\leq C_p\|f\|_p$ for some constant $C_p>0$.
		
		Using mean value theorem, for $t,\theta\in [0,\pi]$, there exists a $\xi$ between $t$ and $\theta$ such that $|g(t)-g(\theta)|=|g'(\xi)||t-\theta|\leq \|g'\|_\infty|t-\theta|$.
		Consider
		\begin{align*}
		|\mathcal{GK}_n(g)(\theta)-&g(\theta)|\\
		&= \frac{2n}{\pi} \sum_{k=1}^n \int\limits_{\theta_k^{(n)}}^{\theta_k^{(n)}+\frac{\pi}{2n}} |g(t)-g(\theta)| \, dt \, |S_{k,n}(\theta)|\\
		&\leq \frac{2n}{\pi}\|g'\|_\infty \sum_{k=1}^n\int\limits_{\theta_k^{(n)}}^{\theta_k^{(n)}+\frac{\pi}{2n}} |t-\theta| \, dt \, |S_{k,n}(\theta)|\\
		&\leq \frac{2n}{\pi}\|g'\|_\infty \sum_{k=1}^n\int\limits_{\theta_k^{(n)}}^{\theta_k^{(n)}+\frac{\pi}{2n}} |t-\theta_k^{(n)}| \, dt \, |S_{k,n}(\theta)|+\\
		&\quad \frac{2n}{\pi}\|g'\|_\infty \sum_{k=1}^n\int\limits_{\theta_k^{(n)}}^{\theta_k^{(n)}+\frac{\pi}{2n}} |\theta-\theta_k^{(n)}| \, dt \, |S_{k,n}(\theta)|\\
		&\leq \frac{2n}{\pi}\|g'\|_\infty\left (\frac{\pi}{2n}\right )^2\sum_{k=1}^n|S_{k,n}(\theta)|+\frac{2n}{\pi}\|g'\|_\infty\left (\frac{\pi}{2n}\right )\sum\limits_{k=1}^n|\theta-\theta_k^{(n)}||S_{k,n}(\theta)|\\
		&\leq c_1\|g'\|_\infty\left (\frac{\pi}{2n}\right )+\|g'\|_\infty\sum\limits_{k=1}^n|\theta-\theta_k^{(n)}||S_{k,n}(\theta)|.
		\end{align*}
		Applying Minkowski's inequality, we have 
	    \[
	    \|\mathcal{GK}_n(g)-g\|_p\leq c_1\|g'\|_\infty\left (\frac{\pi}{2n}\right )+\|g'\|_\infty\sum\limits_{k=1}^n\left (\int\limits_{0}^{\pi} |\theta-\theta_k^{(n)}|^p \, |S_{k,n}(\theta)|^p \, d\theta\right )^\frac{1}{p}.
	    \]
	     \textbf{Case 1: $p=1$} By Proposition \ref{propos}, we get 
	    \[
	    \|\mathcal{GK}_n(g)-g\|_1\leq \tilde C_{1}\|g'\|_\infty\left (\frac{1+\log n}{n}\right ).
	    \]
	    \textbf{Case 2: $1<p<+\infty$} Again applying Proposition \ref{propos}, we obtain
	    \[
	    \|\mathcal{GK}_n(g)-g\|_p\leq \tilde C_{p}\|g'\|_\infty\left (\frac{1}{n}+\frac{1}{n^\frac{1}{p}}\right ),
	    \]
	    where $C_{1,p}>0$ and $C_{2,p}>0$ are constants. 
	    
	    Let 
	    \[
	    m_{n}=\begin{cases}
	    	\frac{1+\log n}{n}, & p = 1, \\[8pt]
	    	\frac{1}{n}+\frac{1}{n^\frac{1}{p}}, & p > 1.
	    \end{cases}
	    \]
	    Now, consider
	    \begin{align*}
	    	\|\mathcal{GK}_n(f)-f\|_p&
	    	\leq (C_p+1)\|f-g\|_p+\|\mathcal{GK}_n(g)-g\|_p\\
	    	&\leq (C_p+1)\|f-g\|_p+\tilde C_pm_{n}\|g'\|_\infty\\
	    	&=(C_p+1)\{\|f-g\|_1+\frac{\tilde C_p}{C_p+1}m_{n}\|g'\|_\infty\}.
	    \end{align*}
	    Now taking infimum over $g\in C^1[0,\pi]$, we have
	    \[ 
	    \|\mathcal{GK}_n(f)-f\|_p\leq \tilde C_pK_p(f,\tilde R_pm_{n}),
	    \]
	    where $R_p=C_p+1$ and $\tilde R_p=\frac{\tilde C_p}{(C_p+1)}$.
	\end{proof}

	\section{Point-wise Boundedness of $\{\mathcal{GK}_n\}_{n\in\mathbb{N}}$ using the Hardy-Littlewood Maximal Operator}\label{section4}
	In this section, we establish the point-wise boundedness of the sequence $\{\mathcal{GK}_n\}_{n\in\mathbb{N}}$ using the Hardy-Littlewood maximal operator.
	The Hardy--Littlewood maximal operator $M$ is defined for $f \in L^1[a,b]$ by
	\[
	Mf(x) := \sup_{I \ni x} \frac{1}{|I|} \int_I |f(t)|\, dt,
	\]
	where the supremum is taken over all intervals $I \subset [a,b]$ containing $x$. Thus, $Mf(x)$ is defined for all $x \in [a,b]$.
	We do not obtain the point-wise bound for $\{\mathcal{GK}_n\}_{n\in\mathbb{N}}$ on the entire interval $[0,\pi]$; nevertheless, it holds on $[\varepsilon,\pi-\varepsilon]$ for all $0<\varepsilon<\pi$, as shown below.
	\begin{theorem}\label{Mains}
		Let $M$ be the Hardy-Littlewood maximal operator defined above. Let $0<\epsilon<\pi$, $\theta\in [\epsilon,\pi-\epsilon]$ and $f\in L^1[0,\pi]$. Then for all $n=1,2,\ldots$,
		\[
		|\mathcal{GK}_n(f)(\theta)|\leq C_\epsilon Mf(\theta),
		\]
		where $C_\epsilon>0$ is a constant which does not depend on $n$, $f$ or $\theta$.
	\end{theorem}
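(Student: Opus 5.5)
Our plan is to bound each local average by the maximal function at $\theta$, using an interval that contains both $\theta$ and the averaging cell, and then to sum with the decay of $S_{k,n}$ given by Grünwald's estimate quoted in the proof of Proposition~\ref{propos}. Write $I_k=[\theta_k^{(n)},\theta_k^{(n)}+\frac{\pi}{2n}]$ and $d_k=|\theta-\theta_k^{(n)}|$. Let $J_k\subset[0,\pi]$ be the smallest interval containing both $\theta$ and $I_k$. Its length satisfies $|J_k|\le d_k+\frac{\pi}{2n}$. Hence
\[
\frac{2n}{\pi}\int_{I_k}|f(t)|\,dt\le \frac{2n}{\pi}|J_k|\cdot\frac{1}{|J_k|}\int_{J_k}|f|\le \Bigl(1+\frac{2n d_k}{\pi}\Bigr)Mf(\theta).
\]
Inserting this into \eqref{gk_n} gives
\[
|\mathcal{GK}_n(f)(\theta)|\le Mf(\theta)\sum_{k=1}^n\Bigl(1+\frac{2n d_k}{\pi}\Bigr)|S_{k,n}(\theta)|\le Mf(\theta)\Bigl(c_1+\frac{2n}{\pi}\sum_{k=1}^n d_k|S_{k,n}(\theta)|\Bigr),
\]
where the term $c_1$ comes from Lemma~\ref{lem}. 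The whole problem is therefore to show that
\[
\sup_n\ \sup_{\theta\in[\epsilon,\pi-\epsilon]}\ n\sum_{k=1}^n d_k|S_{k,n}(\theta)|<\infty .
\]

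For the indices with $d_k\le \pi/n$, each such term is at most $\pi c_1$ (using $|S_{k,n}|\le c_1$), and there are at most three such $k$, so their contribution is bounded. For the remaining indices we would like a bound $|S_{k,n}(\theta)|\lesssim 1/(n^2d_k^2)$. Grünwald's inequality gives this directly, since $|\theta-\theta_k^{(n)}-\frac{\pi}{2n}|\ge d_k/2$. However, this only yields
\[
n\sum_k d_k\cdot\frac{1}{n^2d_k^2}=\frac1n\sum_k\frac{1}{d_k}\sim\sum_{j\le n}\frac1j\sim\log n,
\]
which is not bounded. This is the main obstacle, and it is where the restriction $\theta\in[\epsilon,\pi-\epsilon]$ must be used.

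To overcome it we need a sharper decay, of order $|S_{k,n}(\theta)|\lesssim C_\epsilon/(n^3d_k^3)$, for $\theta$ away from the endpoints. Our first approach is to compute $S_{k,n}$ explicitly. Write $\phi_\pm=\theta\pm\frac{\pi}{2n}$ and note $\cos(n\phi_\pm)=\mp\sin(n\theta)$. Then
\[
S_{k,n}(\theta)=\frac{(-1)^{k}\sin(n\theta)\sin\theta_k^{(n)}}{2n}\left[\frac{1}{\cos\phi_+-\cos\theta_k^{(n)}}-\frac{1}{\cos\phi_--\cos\theta_k^{(n)}}\right].
\]
The bracket is a difference quotient, equal to
\[
\frac{\cos\phi_--\cos\phi_+}{(\cos\phi_+-\cos\theta_k^{(n)})(\cos\phi_--\cos\theta_k^{(n)})},
\]
and its numerator is $2\sin\theta\sin\frac{\pi}{2n}=O(1/n)$. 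For the denominator, $|\cos\phi_\pm-\cos\theta_k^{(n)}|=2|\sin\frac{\phi_\pm+\theta_k}{2}\sin\frac{\phi_\pm-\theta_k}{2}|$. On $[\epsilon,\pi-\epsilon]$, $\sin\frac{\phi_\pm+\theta_k}{2}\ge c\,\epsilon$ only when $\theta_k$ is not near $0$ or $\pi$. The cleaner route is to use $\sin\frac{\phi+\theta_k}{2}\ge c\max(\sin\theta,\sin\theta_k)\ge c\epsilon$, while $|\sin\frac{\phi_\pm-\theta_k}{2}|\gtrsim d_k$. This gives a denominator $\gtrsim \epsilon^2 d_k^2$, so $|S_{k,n}|\lesssim 1/(\epsilon^2n^2d_k^2)$. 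This recovers only the second-order decay, so we must exploit more cancellation.

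A better strategy is to symmetrize in $k$ rather than squeeze more decay out of each term individually. We would group consecutive $k$ into pairs, since the signs $(-1)^k$ alternate, and use the fact that $\frac{2n}{\pi}\int_{I_k}|f|$ is a positive weight. Alternatively, we could refine the maximal estimate itself. Let $D_j$ be the dyadic shells $2^{j}\pi/n<d_k\le2^{j+1}\pi/n$. The sum of the averages over the cells in $D_j$ is bounded by $2^{j}Mf(\theta)$ as a whole: the union of those cells lies in an interval of length about $2^{j+1}\pi/n$ containing $\theta$, and the cells occupy disjoint parts of it. On $D_j$ we have $|S_{k,n}|\lesssim 4^{-j}$. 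Therefore
\[
\sum_{k\in D_j}\frac{2n}{\pi}\int_{I_k}|f|\,|S_{k,n}(\theta)|\lesssim 4^{-j}\cdot\frac{2n}{\pi}\int_{|t-\theta|\le 2^{j+2}\pi/n}|f|\lesssim 4^{-j}2^{j}Mf(\theta).
\]
This sums geometrically in $j$, so the bound is uniform in $n$ and the logarithm disappears; we would adopt this dyadic argument as the actual proof. In it, $\epsilon$ enters only through the constant in the bound $|S_{k,n}|\lesssim 1/(n^2d_k^2)$, via the lower bound on $\sin\frac{\phi+\theta_k}{2}$ in the explicit computation above. The resulting constant $C_\epsilon$ depends only on $\epsilon$ and $c_1$.
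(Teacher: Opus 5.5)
Your proof is correct, and at the decisive step it takes a different route from the paper. The paper stops at what is essentially your first reduction. It bounds each cell separately, by $\int_{\theta_k^{(n)}}^{\theta}|f|$ for $k<j$ and by $\int_{\theta}^{\theta_k^{(n)}+\frac{\pi}{2n}}|f|$ for $k>j$. This gives $|\mathcal{GK}_n(f)(\theta)|\le Mf(\theta)\,\frac{2n}{\pi}\sum_k d_k|S_{k,n}(\theta)|+2c_1Mf(\theta)$. The paper then asserts $\sum_k d_k|S_{k,n}(\theta)|=\mathcal{O}(1/n)$ on $[\epsilon,\pi-\epsilon]$, by applying Mills' estimate for Gr\"unwald operators to $\arccos$.

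You suspected this route costs a factor $\log n$, and it indeed cannot be closed. At $\theta=\pi/2$ with $n$ odd we have $|\sin n\theta|=1$. Your explicit formula then gives $|S_{k,n}(\theta)|\gtrsim 1/(n^2d_k^2)$ for all $\theta_k^{(n)}\in[\pi/4,3\pi/4]$, so $\frac{2n}{\pi}\sum_k d_k|S_{k,n}(\theta)|\gtrsim\log n$. Mills' bound controls the signed quantity $G_n(f)-f$, not the sum with absolute values inside. Also, $\omega(\arccos,\delta)\asymp\sqrt{\delta}$, not $\delta/\sin(\epsilon/2)$.

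Your dyadic grouping is exactly the missing idea. It bounds the total mass of all cells in a shell by a single interval average around $\theta$, instead of bounding each cell by its own enlarged interval. It needs only the second-order decay and the disjointness of the cells $I_k$.

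Your argument also proves more than the statement. You take the bound $|S_{k,n}(\theta)|\lesssim 1/(n^2d_k^2)$ for $d_k>\pi/n$ from Gr\"unwald's inequality, quoted in the proof of Proposition~\ref{propos}. That bound holds uniformly for $\theta\in[0,\pi]$. You can also check this from your explicit formula, using $\sin\theta\,\sin\theta_k^{(n)}\le\sin^2\frac{\theta+\theta_k^{(n)}}{2}$ and $d_k>\pi/n$. You therefore obtain $|\mathcal{GK}_n(f)(\theta)|\le C\,Mf(\theta)$ with an absolute constant on the whole interval. The restriction to $[\epsilon,\pi-\epsilon]$ is needed only if you rely on your cruder bound through $\sin\frac{\phi_\pm+\theta_k^{(n)}}{2}$. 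So your closing remark about where $\epsilon$ enters is unnecessary.

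When you write it up, drop the exploratory detours (third-order decay, pairing consecutive $k$). State the proof as two parts:
\begin{itemize}
\item the per-cell bound for the at most three indices with $d_k\le\pi/n$;
\item the dyadic shells for $d_k>\pi/n$, with each shell's interval intersected with $[0,\pi]$ so that it is admissible in the definition of $M$.
\end{itemize}
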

	\begin{proof}
		Let $\theta\in [0,\pi]$. Then  
		\begin{equation}\label{eqq}
			\mathcal{GK}_n(f)(\theta)=\frac{2n}{\pi}\sum\limits_{k=1}^n \int\limits_{\theta_k^{(n)}}^{\theta_k^{(n)}+\frac{\pi}{2n}}f(t)\ dt S_{k,n}(\theta).
		\end{equation}
		Suppose $\theta\in [\theta_j^{(n)},\theta_{j}^{(n)}+\frac{\pi}{2n}]$ for some $j\in\{1,\ldots, n\}$. Then we split the sum as follows.
		\begin{equation*}
			\mathcal{GK}_n(f)(\theta)=\frac{2n}{\pi}
			\sum\limits_{\substack{1 \le k \le n \\ k \neq j}} \int\limits_{\theta_k^{(n)}}^{\theta_k^{(n)}+\frac{\pi}{2n}}f(t)\ dt S_{k,n}(\theta)+\frac{2n}{\pi}
			\int\limits_{\theta_j^{(n)}}^{\theta_j^{(n)}+\frac{\pi}{2n}}f(t)\ dt S_{j,n}(\theta).
		\end{equation*}
		If $\theta\in [0,\theta_1^{(n)}]$, we have (\ref{eqq}).
		Consider the first sum.  
		Taking modulus, we have 
		\begin{align*}
			\frac{2n}{\pi}
			\left| \sum_{\substack{1 \le k \le n \\ k \neq j}}
			\int\limits_{\theta_k^{(n)}}^{\theta_k^{(n)}+\frac{\pi}{2n}} f(t)\, dt \, S_{k,n}(\theta) \right|
			&\leq \frac{2n}{\pi}
			\sum_{k=1}^{j-1} \int\limits_{\theta_k^{(n)}}^{\theta_k^{(n)}+\frac{\pi}{2n}} |f(t)|\, dt \, |S_{k,n}(\theta)| \\
			&\quad + \frac{2n}{\pi}
			\sum_{k=j+1}^{n} \int\limits_{\theta_k^{(n)}}^{\theta_k^{(n)}+\frac{\pi}{2n}} |f(t)|\, dt \, |S_{k,n}(\theta)| \\
			&\leq \frac{2n}{\pi}
			\sum\limits_{k=1}^{j-1} \int\limits_{\theta_k^{(n)}}^{\theta} |f(t)|\, dt \, |S_{k,n}(\theta)| \\
			&\quad + \frac{2n}{\pi}
			\sum_{k=j+1}^{n} \int\limits_{\theta}^{\theta_k^{(n)}+\frac{\pi}{2n}} |f(t)|\, dt \, |S_{k,n}(\theta)| \\
			&= \frac{2n}{\pi}
			\sum_{k=1}^{j-1} \frac{1}{|\theta-\theta_k^{(n)}|}
			\int\limits_{\theta_k^{(n)}}^{\theta} |f(t)|\, dt \,
			|\theta-\theta_k^{(n)}| \, |S_{k,n}(\theta)| \\
			&\quad + \frac{2n}{\pi}
			\sum_{k=j+1}^{n} \frac{1}{\left|\theta-\theta_k^{(n)}-\frac{\pi}{2n}\right|}
			\int\limits_{\theta}^{\theta_k^{(n)}+\frac{\pi}{2n}} |f(t)|\, dt \\
			&\qquad \times \left|\theta-\theta_k^{(n)}-\frac{\pi}{2n}\right| \, |S_{k,n}(\theta)|.
		\end{align*}
		By the definition of the Hardy Littlewood maximal operator, we have 
		\[
		\frac{1}{|\theta-\theta_k^{(n)}|}
		\int_{\theta_k^{(n)}}^{\theta} |f(t)|\, dt
		\leq Mf(\theta), 
		\quad 
		\frac{1}{\left|\theta-\theta_k^{(n)}-\frac{\pi}{2n}\right|}
		\int_{\theta}^{\theta_k^{(n)}+\frac{\pi}{2n}} |f(t)|\, dt
		\leq Mf(\theta).
		\]
		
		Thus, we have
		\begin{align*}
			\frac{2n}{\pi}
			\left| \sum_{\substack{1 \le k \le n \\ k \neq j}} 
			\int_{\theta_k^{(n)}}^{\theta_k^{(n)}+\frac{\pi}{2n}} f(t)\, dt \, S_{k,n}(\theta) \right|
			&\leq Mf(\theta)\frac{2n}{\pi}
			\sum_{k=1}^{j-1} |\theta-\theta_k^{(n)}| \, |S_{k,n}(\theta)| \\
			&\quad + Mf(\theta)\frac{2n}{\pi}
			\sum_{k=j+1}^{n} \left|\theta-\theta_k^{(n)}-\frac{\pi}{2n}\right| \, |S_{k,n}(\theta)| \\
			&\leq Mf(\theta)\frac{2n}{\pi}
			\sum_{k=1}^{n} |\theta-\theta_k^{(n)}| \, |S_{k,n}(\theta)| \\
			&\quad + Mf(\theta)\frac{2n}{\pi}\frac{\pi}{2n}
			\sum_{k=j+1}^{n} |S_{k,n}(\theta)| \\
			&\leq Mf(\theta)\frac{2n}{\pi}
			\sum_{k=1}^{n} |\theta-\theta_k^{(n)}| \, |S_{k,n}(\theta)|
			+ c_1\, Mf(\theta).
		\end{align*}
		Therefore, we deduce
		\begin{equation*}
			|\mathcal{GK}_n(f)(\theta)|\leq Mf(\theta)\frac{2n}{\pi}\sum\limits_{k=1}^{n}|\theta-\theta_k^{(n)}||S_{k,n}(\theta)|+2c_1Mf(\theta).
		\end{equation*}
		Consider 
		\[
		\sum_{k=1}^{n} |\theta-\theta_k^{(n)}| \, |S_{k,n}(\theta)|
		= \sum_{k=1}^{n} \left| \arccos(\cos\theta) - \arccos(\cos\theta_k^{(n)}) \right| \, |S_{k,n}(\theta)|.
		\]
		
		From \cite{mills1}, for $f \in C[-1,1]$ and $x \in [-1,1]$ with $\cos\theta = x$, we have the inequality
		\[
		\sum_{k=1}^{n} \left| f(\cos\theta) - f(\cos\theta_k^{(n)}) \right| \, |S_{k,n}(\theta)|
		= \mathcal{O}\!\left( \omega\!\left( \frac{\sqrt{1-x^2}}{n} \right) + \omega\!\left( \frac{1}{n^2} \right) \right)
		= \mathcal{O}\!\left( \omega\!\left( \frac{1}{n} \right) \right).
		\]
		Here, we take $f(x) = \arccos x$ for $x \in [-1,1]$. Let $\pi > \varepsilon > 0$ and suppose that $\theta \in [\varepsilon,\pi-\varepsilon]$. Then, it holds that
		\[
		\left| \arccos(\cos\theta) - \arccos(\cos\theta_k^{(n)}) \right|
		\leq \frac{1}{\sin\!\left( \frac{\theta+\theta_k^{(n)}}{2} \right)}
		|\theta-\theta_k^{(n)}|
		\leq \frac{1}{\sin\!\left( \frac{\varepsilon}{2} \right)}
		|\theta-\theta_k^{(n)}|.
		\]
		
		Consequently, $\omega\!\left( \arccos, \frac{1}{n} \right)
		\leq \frac{1}{\sin\!\left( \frac{\varepsilon}{2} \right)} \cdot \frac{1}{n}.$ Therefore, we have
		\[
		\sum_{k=1}^{n} |\theta-\theta_k^{(n)}| \, |S_{k,n}(\theta)|
		= \frac{1}{\sin\!\left( \frac{\varepsilon}{2} \right)} \, \mathcal{O}\!\left( \frac{1}{n} \right).
		\]
		
		Hence, we get
		\[
		|\mathcal{GK}_n(f)(\theta)|
		\leq C \, Mf(\theta)\, \frac{1}{\sin\!\left( \frac{\varepsilon}{2} \right)}
		+ 2c_1\, Mf(\theta)
		= C_\varepsilon \, Mf(\theta),
		\]
		where $C > 0$ is a constant and $C_\varepsilon = C \frac{1}{\sin\!\left( \frac{\varepsilon}{2} \right)} + 2c_1.$
		
	\end{proof}
	\section{Korovkin-type results in Banach Function spaces}\label{section5}
	
	Korovkin-type theorems play a fundamental role in approximation theory, providing powerful criteria to verify the convergence of sequences of positive linear operators by testing them on a small set of functions \cite{korovkin, altomare}. This remarkable result unified several classical approximation theorems in the literature such as due to Bernstein, Weierstrass, Fe\'jer etc. These results have been extended to various settings, including that of Banach function spaces. 
	
	In this section, we recall some fundamental definitions concerning Banach function spaces and state a Korovkin-type theorem in this setting. Using this theorem, we extend the convergence results for $\{\mathcal{GK}_n\}_{n\in\mathbb{N}}$ obtained in $L^p[0,\pi]$ to a more general class of Banach function spaces.
	
	Below, we recall the definition of a function norm. 
	
	Let $(A, \mathcal{S}, \mu)$ be a measurable space, where $\mathcal{S}$ is a $\sigma$-algebra of measurable subsets of $A$. Let $\mathcal{M}$ denote the set of measurable functions on $A$, and let $\mathcal{M}^+$ denote the set of non-negative measurable functions on $A$.
	
	\begin{definition}
		A mapping $\rho: \mathcal{M}^+ \to [0, +\infty]$ is called a function norm if the following properties hold for all $f, g, f_n \in \mathcal{M}^+$, $a \geq 0$, and $E \in \mathcal{S}$:
		\begin{enumerate}
			\item $\rho(f) = 0$ if and only if $f = 0$ $\mu$-a.e., $\rho(af) = a \rho(f)$, and $\rho(f+g) \leq \rho(f) + \rho(g)$;
			\item If $g \leq f$ $\mu$-a.e., then $\rho(g) \leq \rho(f)$;
			\item If $f_n \uparrow f$ $\mu$-a.e., then $\rho(f_n) \uparrow \rho(f)$;
			\item If $\mu(E) < +\infty$, then $\rho(\chi_E) < +\infty$;
			\item If $\mu(E) < +\infty$, then there exists a constant $C_E > 0$ such that
			\[
			\int\limits_E f \, d\mu \leq C_E \rho(f),
			\]
			where $C_E$ depends on $E$ and $\rho$, but not on $f$.
		\end{enumerate}
	\end{definition}
	
	A Banach function space $X$ generated by $\rho$ is a Banach space of functions $f \in \mathcal{M}$ equipped with the norm $\|f\|_X := \rho(|f|)$.
	
	\begin{definition}
		A function $f \in X$ is said to have an absolutely continuous norm if
		\[
		\|f \chi_{E_n}\|_X \to 0 \quad \text{as } n \to \infty,
		\]
		for every sequence $\{E_n\} \subset \mathcal{S}$ such that $E_n \to \emptyset$ $\mu$-a.e. (i.e., $\chi_{E_n} \to 0$ $\mu$-a.e.).
	\end{definition}
	
	Define
	\[
	X_a = \left \{ f \in X : f \text{ has an absolutely continuous norm} \right \}.
	\]
	
	\textbf{Special case:} $A = [a,b]$, $\mathcal{S}$ is the Borel $\sigma$-algebra, and $\mu$ is the Lebesgue measure.
	
	We now discuss a special case of Banach function spaces. Without loss of generality, we assume $A = [0,1]$. Let $\mu$ be the Lebesgue measure and $\mathcal{S}$ the Borel $\sigma$-algebra. Let $\rho$ be a function norm on this measure space, and let $X$ be the corresponding Banach function space with norm $\|f\|_X = \rho(|f|)$.
	
	For $\delta > 0$, define the shift operator $T_\delta$ on $X$ by
	\[
	T_\delta(f)(x) =
	\begin{cases}
		f(x+\delta), & \text{if } x+\delta \in [0,1], \\
		0, & \text{otherwise}.
	\end{cases}
	\]
	
	Let $X^S$ denote the subspace of $X$ defined by the closure in $X$ of the set
	\[
	\left \{ f \in X : \lim_{\delta \to 0} \|T_\delta(f) - f\|_X = 0 \right \}.
	\]
	It is known that $X^S$ contains $C[0,1]$ and is, in fact, strictly larger than $C[0,1]$ (see \cite{vinaya, zeren}). 
	
	In 2022, Zeren et al.~\cite{zeren} established a Korovkin-type theorem on the subspace $X^S$. Subsequently, in 2025, Kiran Kumar and Vinaya extended this result to a non-positive version, stated as follows. 
	Let $B(X^S)$ denote the space of all bounded linear operators on $X^S$.
	\begin{theorem}\cite{vinaya}\label{nonpositive}
		Let $X$ be a Banach function space such that $1 \in X_a$, and let $\{L_n\}_{n \in \mathbb{N}}$ be a sequence of bounded linear operators on $X^S$ satisfying:
		\begin{enumerate}
			\item $\lim\limits_{n \to \infty} L_n(g) = g$ in $C[0,1]$ for all $g \in \{1, t, t^2\}$;
			\item $\sup\limits_{n} \|L_n\|_{B(C[0,1])} < \infty$.
		\end{enumerate}
		Then $\lim\limits_{n \to \infty} L_n(f) = f$ in $X$ for all $f \in X^S$ if and only if $\sup\limits_{n} \|L_n\|_{B(X^S)} < \infty.$
	\end{theorem}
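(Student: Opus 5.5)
The plan is to prove the two implications of Theorem~\ref{nonpositive} separately. The direction ``convergence implies uniform boundedness'' is the easy one. If $L_n(f)\to f$ in $X$ for every $f\in X^S$, then for each fixed $f$ the sequence $\{L_n(f)\}$ is convergent, hence bounded in $X$. Since $X^S$ is a closed subspace of the Banach space $X$, it is itself a Banach space, and the uniform boundedness principle (Banach--Steinhaus) then gives $\sup_n\|L_n\|_{B(X^S)}<\infty$.

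For the converse, I would use the standard density argument from the proof of Theorem~\ref{main2}. First I need that $L_n(g)\to g$ in $X$ for every $g\in C[0,1]$. By hypothesis~(1), $L_n$ reproduces the test functions $1,t,t^2$ in the limit, uniformly. Hypothesis~(2) gives uniform boundedness on $C[0,1]$. Since polynomials are dense in $C[0,1]$, I would pass from test-function convergence to convergence on all of $C[0,1]$. For a \emph{non-positive} sequence this step is not automatic: three test functions only control a positive sequence. So I would instead rely on the precise formulation of the result as it appears in \cite{vinaya}, where~(1) is understood as yielding convergence on a dense subset. Alternatively, for the operators of this paper, I would note that Theorem~\ref{main} already gives $\|L_n(g)-g\|_\infty\to0$ for all $g\in C[0,1]$ directly.

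Uniform convergence then transfers to $X$. Because $1\in X_a\subset X$ and $\rho$ is monotone, we have $\|h\|_X\le \|h\|_\infty\,\|1\|_X$ for bounded $h$. Hence $\|L_n(g)-g\|_X\le\|1\|_X\|L_n(g)-g\|_\infty\to0$.

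Next I would show that $C[0,1]$ is dense in $X^S$. For $f$ with $\|T_\delta f-f\|_X\to0$, I would mollify $f$ by averaging the shifts, $f_h=\frac1h\int_0^h T_\delta f\,d\delta$. Using Minkowski's integral inequality for the function norm, one gets $\|f_h-f\|_X\le\sup_{\delta\le h}\|T_\delta f-f\|_X$, which tends to $0$. The function $f_h$ is continuous on $[0,1)$, and any boundary defect is handled by $1\in X_a$, which makes $\|\chi_{E}\|_X$ small on small sets. I expect this density step to be the main technical obstacle, since the absolute continuity of the norm of $1$ is exactly what controls the truncation near the endpoint. Density in the closure $X^S$ then follows.

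Finally, set $K=\sup_n\|L_n\|_{B(X^S)}$. Given $f\in X^S$ and $\epsilon>0$, pick $g\in C[0,1]$ with $\|f-g\|_X<\epsilon$. Then
\[
\|L_n(f)-f\|_X\le (K+1)\|f-g\|_X+\|L_n(g)-g\|_X,
\]
and the last term tends to $0$, which proves $\limsup_n\|L_n(f)-f\|_X\le (K+1)\epsilon$ for every $\epsilon>0$.
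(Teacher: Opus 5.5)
\textbf{There is a genuine gap, and it is the step you flagged:} deriving $\|L_n(g)-g\|_\infty\to0$ for every $g\in C[0,1]$ from hypotheses (1) and (2). Neither of your remedies closes it. Deferring to a ``precise formulation'' in \cite{vinaya} replaces the hypothesis rather than using it. Theorem~\ref{main} is a property of the particular operators $\mathcal{GK}_n$, not of an arbitrary sequence satisfying (1)--(2). The paper gives no proof of its own here; it only quotes the result.

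Moreover, the gap cannot be filled, because the ``if'' direction is false as stated. Let $q_0,q_1,q_2$ be the orthonormal Legendre polynomials on $[0,1]$, and for all $n$ set $L_n(f)=Pf:=\sum_{j=0}^{2}\bigl(\int_0^1 fq_j\,dt\bigr)q_j$.
\begin{itemize}
\item By axiom (5), $\|Pf\|_X\le C_{[0,1]}\,\rho(1)\sum_j\|q_j\|_\infty^2\,\|f\|_X$, and $P$ takes values in polynomials, so $\sup_n\|L_n\|_{B(X^S)}<\infty$.
\item Also $\|P\|_{B(C[0,1])}<\infty$ and $P$ fixes $1,t,t^2$, so (1) and (2) hold.
\item Yet $L_n(t^3)=P(t^3)\neq t^3$ for every $n$.
\end{itemize}
Three test functions control only positive operators, or operators close to positive ones. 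For example, if $\|L_n\|_{B(C[0,1])}\to1$, the negative parts of the measures representing $f\mapsto L_n(f)(x)$ have mass tending to $0$ uniformly in $x$, and the classical argument goes through. So the statement needs (1) strengthened, e.g.\ to $L_n(g)\to g$ uniformly for all $g\in C[0,1]$. That is all the paper needs, since Theorem~\ref{main} supplies it for $\mathcal{GK}_n$.

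\textbf{Under that strengthened hypothesis the rest of your argument is sound:} Banach--Steinhaus on the closed subspace $X^S$, the bound $\|h\|_X\le\rho(1)\|h\|_\infty$, density of $C[0,1]$ in $X^S$, and the final $\varepsilon$-estimate. The density step is not the main obstacle, and two corrections apply to it.
\begin{itemize}
\item $f_h(x)=\frac1h\int_x^{\min(x+h,1)}f(s)\,ds$ is continuous on all of $[0,1]$, because $f\in L^1$ by axiom (5). So there is no boundary defect to repair. The hypothesis $1\in X_a$ is needed instead to show $C[0,1]\subset X^S$, so that $L_n(g)$ is even defined. This follows from $\|T_\delta g-g\|_X\le\omega(g,\delta)\rho(1)+\|g\|_\infty\|\chi_{(1-\delta,1]}\|_X$.
\item The Minkowski integral inequality for $\rho$ needs justification. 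One route is the representation $\rho(f)=\sup\{\int fg\,d\mu:\ g\ge0,\ \rho'(g)\le1\}$ with the associate norm $\rho'$ (a consequence of axiom (3)), combined with Tonelli's theorem.
\end{itemize}
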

	As already mentioned the operators $\mathcal{GK}_n$, for $n=1,2,\ldots$ are non-positive. Therefore, we employ this non-positive Koorvkin-type theorem, to extend the convergence results to general Banach function spaces.
	
	Clearly, the sequence of operators $\{\mathcal{GK}_n\}_{n\in\mathbb{N}}$ satisfies conditions (1) and (2) on $C[0,\pi]$ (see Section \ref{section1}). To apply Theorem \ref{nonpositive}, it therefore suffices to verify that $\sup\limits_{n} \|\mathcal{GK}_n\|_{B(X^S)} < \infty.$
	In the following section, we establish this boundedness condition and consequently apply Theorem \ref{nonpositive} to several explicit examples of Banach function spaces.
	\section{Boundedness and Convergence of $\{\mathcal{GK}_n\}_{n\in\mathbb{N}}$ in some Banach Function Spaces}
	We observe that the sequence of operators $\{\mathcal{GK}_n\}_{n\in\mathbb{N}}$ is non-positive due to the non-positivity of the Lagrange interpolation polynomials. Our aim is to extend the convergence of the sequence $\{\mathcal{GK}_n\}$ to the subspace $X^S$ for certain Banach function spaces. To this end, we apply Theorem \ref{nonpositive}.
	
	Conditions (1) and (2) have already been established in Section \ref{section1}. Therefore, it remains to verify the uniform boundedness of this sequence of operators on $X$, a given Banach function space. To achieve this, we make use of the point-wise estimate involving the Hardy-Littlewood maximal operator obtained in Section \ref{section4}.
	
	We now prove the boundedness, and hence the convergence, of the sequence $\{\mathcal{GK}_n\}_{n\in\mathbb{N}}$ on several explicit Banach function spaces presented below.
	\begin{example}\textbf{Weighted Lebesgue space}
		
		Let \( X = L^{p,w}(0,\pi) \), \( 1 < p < +\infty \), be a weighted Lebesgue space of measurable functions \( f \) on \([0,\pi]\) with the norm
		\[
		\|f\|_{p,w}
		=
		\left(
		\int\limits_0^\pi |f(t)|^p w(t)\,dt
		\right)^{\frac{1}{p}},
		\]
		where the weight function \( w \) satisfies the Muckenhoupt condition. Then the Hardy-Littlewood maximal operator $M$ is bounded on \( L^{p,w}(0,\pi) \) (see \cite{zeren}). On restricting the interval to $[\epsilon,\pi-\epsilon]$ for $0<\epsilon<\pi$ and applying Theorem \ref{Mains}, for $f\in L^{p.w}(0,\pi)$, we have 
		\[
		\|\mathcal{GK}_n\|_{p,w,\epsilon}\leq \|Mf\|_{p,w,\epsilon}\leq C_\epsilon \|f\|_{p,w,\epsilon}, 
		\]
		where $\|.\|_{p,w,\epsilon}$ denotes the weighted $L^p$ norm on $[\epsilon,\pi-\epsilon]$. Applying Theorem \ref{nonpositive}, we already have $\lim\limits_{n\to\infty}\|\mathcal{GK}_n(f)-f\|_{p,w,\epsilon}=0$ for all $f\in (L^{p.w}[0,\pi])^S$.
	\end{example}
	\begin{example}\textbf{Grand Lebesgue Space}
		
		Let \( X = L^{p}_G(0,\pi) \), \( 1 < p < +\infty \), be a grand Lebesgue space consisting of measurable functions \( f \) on \([0,\pi]\) with the norm
		\[
		\|f\|_G^{p}
		=
		\sup_{0<h<p-1}
		h^{\frac{1}{p-h}} \, \|f\|_{p-h}.
		\]
		
		Then the subspace \( (L^{p}_G(0,\pi))^S \) consists of functions \( f \) satisfying the condition
		\[
		\lim_{h \to +0}
		h \int\limits_0^\pi |f(t)|^{p-h}\,dt = 0.
		\]
		Moreover the maximal function is bounded in this space (see \cite{zeren}). Thus, on restricting to the interval $[\epsilon, \pi-\epsilon]$ and applying Theorem \ref{Mains} and Theorem \ref{nonpositive}, we have 
		\[
		\|\mathcal{GK}_n(f)\|_{G}^{p,\epsilon}\leq \|Mf\|_{G}^{p,\epsilon}\leq C_\epsilon\|f\|_{G}^{p,\epsilon}
		\]
		and 
		\[
		\lim\limits_{n \to \infty} \|\mathcal{GK}_n f-f\|_{G}^{p,\epsilon} \quad \text { for all } f \in (L^{p}_G(0,\pi))^S,
		\]
		where $\|.\|_G^{p,\epsilon}$ denotes the norm $\|.\|_G^{p}$ restricted to the interval $[\epsilon,\pi-\epsilon]$.
	\end{example}
	
	\begin{example}\textbf{Weighted grand Lebesgue space}
		
		Let \( X = L^{p,w}_G(0,\pi) \), \( 1 < p < +\infty \), be a weighted grand Lebesgue space consisting of measurable functions \( f \) on \([0,\pi]\) with the norm
		\[
		\|f\|_G^{p,w}
		=
		\sup_{0<h<p-1}
		h
		\left(
		\int\limits_0^1 |f(t)|^{p-h} w(t)\,dt
		\right)^{\frac{1}{p-h}},
		\]
		where the weight \( w \) satisfies the Muckenhoupt condition. By the boundedness of the Hardy-Littlewood maximal operator \( M \) in this space (see \cite{zeren}), applying Theorem \ref{Mains} and Theorem \ref{nonpositive} we have 
		\[
		\|\mathcal{GK}_n(f)\|_{G}^{p,w,\epsilon}\leq \|Mf\|_{G}^{p,w,\epsilon}\leq C_\epsilon\|f\|_{G}^{p,w,\epsilon}
		\]
		and 
		\[
		\lim\limits_{n \to \infty} \|\mathcal{GK}_n f-f\|_{G}^{p,w,\epsilon} \quad \text { for all } f \in (L^{p}_G(0,\pi))^S,
		\]
		where $\|.\|_G^{p,w,\epsilon}$ denotes the norm $\|.\|_G^{p,w}$ restricted to the interval $[\epsilon,\pi-\epsilon]$.
	\end{example}
	\begin{example}\textbf{Orlicz space}
		
		Let \( \varphi(t) \ge 0 \), $\psi(t) = \sup_{\varphi(s)\le t} s, \ t \ge 0,$ $\Phi(t) = \int_0^t \varphi(s)\,ds,$ and  $\Psi(t) = \int_0^t \psi(s)\,ds.$
		
		Let \( X = L^\Phi(0,\pi) \) be an Orlicz space with Young function \( \Phi \), consisting of measurable functions \( f \) on \([0,\pi]\) with the norm
		\[
		\|f\|_{\Phi}
		=
		\sup_{g \in S^\Psi}
		\int\limits_0^\pi |f(x)g(x)|\,dx,
		\]
		where
		$S^\Psi = \left\{ g \in L^1(0,\pi) : \int_0^\pi \Psi(|g(x)|)\,dx \le 1 \right\}.$ Assuming that \( \Phi \) satisfies the \( \Delta_2 \)-condition, i.e., there exist constants \( k>0 \), \( T \ge 0 \) such that
		$\Phi(2t) \ge k \Phi(t), \ t \ge T$, the Hardy-Littlewood maximal operator \( M \) is bounded in \( L^\Phi(0,\pi) \) (see \cite{zeren,orlicz}). Applying Theorem \ref{Mains} and Theorem \ref{nonpositive}, we have 
		\[
		\|\mathcal{GK}_n(f)\|_{\Phi,\epsilon}\leq \|Mf\|_{\Phi,\epsilon}\leq C_\epsilon\|f\|_{\Phi,\epsilon}
		\]
		and 
		\[
		\lim\limits_{n\to\infty}\|\mathcal{GK}_n(f)-f\|_{\Phi,\epsilon}=0\ \quad \text{for all } f\in L^\Phi(0,\pi)^S, 
		\]
		where $\|.\|_{\Phi,\epsilon}$ denotes the norm $\|.\|_\Phi$ restricted to the interval $[\epsilon,\pi-\epsilon]$.
	\end{example}
	\begin{example}\textbf{Lebesgue space with variable exponent}
		
		Let \( X = L^{p(\cdot)}(0,\pi) \), \( 1 < p(x) < +\infty \), be the variable exponent Lebesgue space with norm
		\[
		\|f\|_{p(\cdot)}
		=
		\inf\left\{
		\lambda > 0 :
		\int\limits_0^\pi \left|\frac{f(t)}{\lambda}\right|^{p(t)} dt \le 1
		\right\}.
		\]
		
		Then  \( X^S = L^{p(\cdot)}(0,\pi) \) \cite{variableexpo}. Assume that
		\[
		|p(x) - p(y)| \le \frac{c_0}{-\log |x-y|}, \quad |x-y| < \tfrac{1}{2},
		\]
		and \( 1 < p^- \le p^+ < +\infty \), where
		$p^- = \operatorname*{ess\,inf}\limits_{x \in [0,\pi]} p(x), \
		p^+ = \operatorname*{ess\,sup}\limits_{x \in [0,\pi]} p(x).$ Then the maximal operator \( M \) is bounded in \( L^{p(\cdot)}(0,\pi) \). Hence by Theorem \ref{Mains} and Theorem \ref{nonpositive} we have 
		\[
		\|\mathcal{GK}_n(f)\|_{p(.),\epsilon}\leq \|Mf\|_{p(.),\epsilon}\leq C_\epsilon\|f\|_{p(.),\epsilon}
		\]
		and 
		\[
		\lim\limits_{n\to\infty}\|\mathcal{GK}_n(f)-f\|_{p(.),\epsilon}=0\ \quad \text{for all } f\in L^{p(.)}(0,\pi), 
		\]
		where $\|.\|_{p(.),\epsilon}$ denotes the norm $\|.\|_{p(.)}$ restricted to the interval $[\epsilon,\pi-\epsilon]$.
	\end{example}
	\begin{example}\textbf{Morrey space}
		
		Let \( 1 < p \le p_0 < +\infty \) and \( X = M^{p_0}_p(0,\pi) \) be the Morrey space with norm
		\[
		\|f\|_{M^{p_0}_p(0,\pi)}
		=
		\sup_{I \subset [0,\pi]}
		|I|^{\frac{1}{p_0}}
		\left(
		\frac{1}{|I|} \int\limits_I |f(t)|^p dt
		\right)^{\frac{1}{p}},
		\]
		where the supremum is taken over all intervals \( I \subset [0,\pi] \). Then \( M \) is bounded in \( M^{p_0}_p(0,\pi) \) (see \cite{morrey, morrey1, zeren}). Hence applying Theorem \ref{Mains} and Theorem \ref{nonpositive} we have 
		\[
		\|\mathcal{GK}_n(f)\|_{M^{p_0}_p(\epsilon,\pi-\epsilon)}\leq \|Mf\|_{M^{p_0}_p(\epsilon,\pi-\epsilon)}\leq C_\epsilon\|f\|_{M^{p_0}_p(\epsilon,\pi-\epsilon)}
		\]
		and 
		\[
		\lim\limits_{n\to\infty}\|\mathcal{GK}_n(f)-f\|_{M^{p_0}_p(\epsilon,\pi-\epsilon)}=0\ \quad \text{for all } f\in {M^{p_0}_p(0,\pi)}.
		\]
	\end{example}
	\begin{example}\textbf{Weighted Morrey space}
		
		Let \( 1 < p \le p_0 < +\infty \), \( w \) be a weight, and \( X = M^{p_0}_{p,w}(0,1) \) with norm
		\[
		\|f\|_{M^{p_0}_{p,w}(0,1)}
		=
		\sup_{I \subset [0,1]}
		|I|^{\frac{1}{p_0}}
		\left(
		\frac{1}{|I|} \int\limits_I |f(t)|^p w(t)\,dt
		\right)^{\frac{1}{p}}.
		\]
		
		Assume that
		\[
		\sup_{I' \subset I}
		\left(
		\frac{|I|}{|I'|}
		\right)^{\frac{p}{p_0}}
		\left(
		\frac{1}{|I|} \int\limits_I w(t)\,dt
		\right)
		\left(
		\frac{1}{|I'|} \int\limits_{I'} w(t)^{-\frac{1}{p-1}} dt
		\right)^{p-1}
		< \infty.
		\]
		
		Then the maximal operator \( M \) is bounded in \( M^{p_0}_{p,w}(0,1) \) (see \cite{morrey1, zeren}), and hence using 
		Theorem \ref{Mains} and Theorem \ref{nonpositive} we have 
		\[
		\|\mathcal{GK}_n(f)\|_{M^{p_0}_{p,w}(\epsilon,\pi-\epsilon)}\leq \|Mf\|_{M^{p_0}_{p,w}(\epsilon,\pi-\epsilon)}\leq C_\epsilon\|f\|_{M^{p_0}_{p,w}(\epsilon,\pi-\epsilon)}
		\]
		and 
		\[
		\lim\limits_{n\to\infty}\|\mathcal{GK}_n(f)-f\|_{M^{p_0}_{p,w}(\epsilon,\pi-\epsilon)}=0\ \quad \text{for all } f\in {M^{p_0}_{p,w}(0,\pi)}.
		\]
	\end{example}
	\begin{example}\textbf{Morrey Space $\mathcal{M}^{p,\lambda}(0,\pi)$}
		
		Let $X = \mathcal{M}^{p,\lambda}(0,\pi)$, where $1 \le p < \infty$ and $0 < \lambda < 1$, be the Morrey space of measurable functions $f$ on $[0,\pi]$ with the norm
		\[
		\|f\|_{\mathcal{M}^{p,\lambda}}
		=
		\sup_{E \subset [0,\pi]}
		\left(
		\frac{1}{|E|^\lambda}
		\int\limits_E |f(t)|^p \, dt
		\right)^{\frac{1}{p}},
		\]
		where the supremum is taken over all measurable sets $E \subset [0,\pi]$. Then \( M \) is bounded in \( M^{p,\lambda}(0,\pi) \) as shown in \cite{zeren}. Using 
		Theorem \ref{Mains} and Theorem \ref{nonpositive} we have 
		\[
		\|\mathcal{GK}_n(f)\|_{M^{p,\lambda}(\epsilon,\pi-\epsilon)}\leq \|Mf\|_{M^{p,\lambda}(\epsilon,\pi-\epsilon)}\leq C_\epsilon\|f\|_{M^{p,\lambda}(\epsilon,\pi-\epsilon)}
		\]
		and 
		\[
		\lim\limits_{n\to\infty}\|\mathcal{GK}_n(f)-f\|_{M^{p,\lambda}(\epsilon,\pi-\epsilon)}=0\ \quad \text{for all } f\in {M^{p,\lambda}(0,\pi)}.
		\]
	\end{example}
	
	\section*{Concluding Remarks}
	In this paper, we introduced a new sequence of operators based on the Gr\"unwald interpolation operators on $C[0,\pi]$ and $L^p[0,\pi]$, $1 \leq p < +\infty$. We established the uniform boundedness of this sequence in $C[0,\pi]$ and $L^1[0,\pi]$, and consequently extended it to $L^p[0,\pi]$ for $1 < p < +\infty$ using the Riesz--Thorin interpolation theorem. We then proved the convergence of this sequence and obtained the corresponding rates of convergence using the modulus of continuity and a suitable $K$-functional. Furthermore, we derived a point-wise estimate for this sequence via the Hardy-Littlewood maximal operator. This allowed us to extend the convergence results to several Banach function spaces on a nontrivial subspace by applying a Korovkin-type theorem from \cite{vinaya}. 
	
	The extension of these convergence results to the entire Banach function space remains an open problem. Moreover, obtaining quantitative estimates of convergence in this setting is a direction for future research. The Gr\"unwald--Kantorovich operators for other choice of nodes and their convergence properties will be discussed elsewhere. 
	\section*{Acknowledgments}
	The author wishes to thank Dr. V. B. Kiran Kumar and Prof. M. N. N. Namboodiri for fruitful discussions on Gr\"unwald interpolation operators and related topics. She also thanks the organizers of the workshop "Commutative and Non-Commutative Harmonic Analysis", held at NISER Bhubaneshwar, India during March 2-7, 2026, where she gained valuable insights into several important topics in harmonic analysis.

\end{document}